\newtheorem{theorem}{Theorem}[section]
\newtheorem{lemma}[theorem]{Lemma}
\newtheorem{cor}[theorem]{Corollary}
\newtheorem{quest}[theorem]{Question}
\newtheorem{prop}[theorem]{Proposition}
\title{Remarks on the Spectral Approach to Finding Short Paths}
\author{Kelly Yancey \thanks{Institute for Defense Analyses / Center for Computing Sciences (IDA / CCS), kbyance@super.org} and Matthew P. Yancey \thanks{Institute for Defense Analyses / Center for Computing Sciences (IDA / CCS), mpyance@super.org}}
\begin{document}
\maketitle

\begin{abstract}
We are interested in the general question: to what extent are the spectral properties of a graph connected to the distance function?
Our motivation is a concrete example of this question that is due to Steinerberger.
We provide some negative results via constructions of families of graphs where the spectral properties disagree with the distance function in a manner that answers many of the questions posed by Steinerberger.
We also provide a positive result by replacing Steinerberger's set-up with conditions involving graph curvature.
\end{abstract}

%%%%%%%%%%%%%%%%%%%%%%%%%%%%%%%%%%%%%%%%%%%%%%%%%%%%%%%%%%%%%%%%%%%%%%%%%%%%%%%%%%%%%%%%%%%%%%%%%%%%%%%%%%%%%%%%%%%%%%%%%%%%%%%
%%%%%%%%%%%%%%%%%%%%%%%%%%% INTRODUCTION %%%%%%%%%%%%%%%%%%%%%%%%%%%%%%%%%%%%%%%%%%%%%%%%%%%%%%%%%%%%%%%%%%%%%%%%%%%%%%%%%%%%%%
%%%%%%%%%%%%%%%%%%%%%%%%%%%%%%%%%%%%%%%%%%%%%%%%%%%%%%%%%%%%%%%%%%%%%%%%%%%%%%%%%%%%%%%%%%%%%%%%%%%%%%%%%%%%%%%%%%%%%%%%%%%%%%%
\section{Introduction}

We assume connected graph $G$ with vertex set $V$ and edge set $E$ is fixed.
We will use the notation $V = \{1,2,\ldots , n\}$.
The Laplacian of $G$ is $L = D - A$, where $A$ is the adjacency matrix and $D$ is the diagonal degree matrix.
We assume some vertex $i$ is fixed, which we will call \emph{the special vertex}.
Let $L_i$ denote the minor of $L$ formed by removing row $i$ and column $i$.
The length of a path is the number of edges it contains.

The eigenvector associated to the second smallest eigenvalue of $L$ is called the Fiedler vector.
When appropriately normalized, the Fiedler vector---interpreted as a function $f:V \rightarrow \mathbb{R}$---is known to be a solution to the optimization problem
\begin{equation}\label{original laplace as optimization}
 \min_{\|g\|_2 = 1, 0 = \sum_j g(j)} \sum_{jk \in E} |g(k) - g(j)|^2 . 
\end{equation}
We define $V_i = V \setminus \{i\}$, and let $f_i:V_i \rightarrow \mathbb{R}$ be the eigenvector associated to the smallest eigenvalue of $L_i$.
Steinerberger \cite{S} recently posed many questions about the relationship between the values of $f_i$ and the set shortest paths in $G$ with endpoint $i$.
In this connection, the function $f_i$ is lifted to have domain $V$ by setting $f_i(i) = 0$.
This lift allows us to describe $f_i$ as the solution to an optimization problem that resembles the characterization of the Fiedler vector of the Laplacian: $f_i$ is a solution to 
\begin{equation}\label{f_i as optimization problem}
 \min_{\|g\|_2 = 1, 0 = g(i)} \sum_{jk \in E} |g(k) - g(j)|^2 . 
\end{equation}
%When it is not clear, we will specify that $i$ is \emph{the special vertex} of (\ref{f_i as optimization problem}).
To facilitate the connection between $f_i$ and the shortest paths beginning at $i$, Steinerberger proved several results about $f_i$.

Let $G-i$ denote the graph $G$ after vertex $i$ is deleted.
If $G-i$ is not connected, then the problem of finding paths in $G$ can be separated into independent sub-problems.

\begin{theorem}[\cite{S}]\label{core result}
If $G-i$ is connected, then the following are true about $f_i$.
\begin{itemize}
	\item If $f_i(u) > 0$ for some vertex $u$, then $f_i(u') > 0$ for all $u' \in V_i$.  By multiplying all values of $f_i$ by $-1$, we will assume for the rest of the paper that $f_i$ is positive over $V_i$.
	\item For $u \in V_i$, there exists a $u' \in N(u)$ (possibly $u' = i$) such that $f_i(u') < f_i(u)$.
\end{itemize}
\end{theorem}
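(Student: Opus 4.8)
The plan is to reduce both bullets to two elementary properties of the matrix $L_i$: first, that $L_i$ is positive definite, so its smallest eigenvalue $\lambda$ is strictly positive; second, that when $G-i$ is connected, $L_i$ has nonpositive off-diagonal entries whose nonzero pattern is exactly the edge set of $G-i$. For positive definiteness, given $x \in \mathbb{R}^{V_i}$ I would extend it to $\tilde x \in \mathbb{R}^V$ with $\tilde x(i) = 0$; then $x^\top L_i x = \tilde x^\top L \tilde x = \sum_{jk \in E}(\tilde x(j) - \tilde x(k))^2 \geq 0$, and equality forces $\tilde x$ to be constant on each connected component of $G$, hence identically $0$ since $G$ is connected and $\tilde x(i) = 0$. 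Thus $\lambda := \lambda_{\min}(L_i) > 0$.

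For the first bullet, I would apply the Perron--Frobenius theorem to $M := cI - L_i$, where $c$ exceeds the maximum degree of $G$. Then $M$ is entrywise nonnegative, the nonzero off-diagonal entries of $M$ are precisely the pairs $\{u,v\}$ with $uv \in E$ (the edges of $G-i$), so $M$ is irreducible exactly because $G-i$ is connected; moreover the spectral radius of $M$ equals $c - \lambda$. Hence $c-\lambda$ is a simple eigenvalue of $M$ admitting a strictly positive eigenvector. Since $f_i$ is an eigenvector of $L_i$ for $\lambda$, it lies in this one-dimensional eigenspace, so it is a nonzero scalar multiple of a strictly positive vector; that is, $f_i$ is either everywhere positive or everywhere negative on $V_i$. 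After possibly multiplying by $-1$ we may take $f_i > 0$ on $V_i$.

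For the second bullet, I would expand the eigenvalue equation $L_i f_i = \lambda f_i$ at a vertex $u \in V_i$, using the lift $f_i(i) = 0$. Because $f_i(i) = 0$, the sum of $f_i$ over the neighbors of $u$ other than $i$ coincides with the sum over all of $N(u)$, so the equation becomes $\sum_{v \in N(u)} f_i(v) = (\deg(u) - \lambda)\, f_i(u)$. If we had $f_i(v) \geq f_i(u)$ for every $v \in N(u)$, the left-hand side would be at least $\deg(u)\, f_i(u)$, while the right-hand side is strictly smaller than $\deg(u)\, f_i(u)$ since $\lambda > 0$ and $f_i(u) > 0$. This contradiction produces a neighbor $u' \in N(u)$ with $f_i(u') < f_i(u)$; here $u' = i$ is permitted, and indeed when $i \in N(u)$ the choice $u' = i$ works since $f_i(i) = 0 < f_i(u)$.

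The step I expect to require the most care is the Perron--Frobenius argument: one must verify that ``$G-i$ connected'' is precisely the irreducibility hypothesis for $M$, and then invoke simplicity of the Perron eigenvalue so that the eigenvector $f_i$ is well defined up to sign and the sign normalization is unambiguous. Once $\lambda > 0$ and $f_i > 0$ on $V_i$ are in hand, the counting argument for the second bullet is routine.
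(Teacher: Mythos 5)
Your proof is correct, but it reaches the first bullet by a different route than the one this paper leans on. The paper itself does not reprove Theorem \ref{core result} (it is cited from \cite{S}); what it does prove are the weighted generalizations, and there the positivity statement (Theorem \ref{core weighted result part 1}) is obtained variationally: starting from the characterization of $f_i$ as a minimizer of the Rayleigh-type quotient, one rules out zeros by perturbing the zero set and rules out sign changes by passing to $|f_i|$. You instead apply Perron--Frobenius to $M=cI-L_i$, using that the off-diagonal pattern of $L_i$ is exactly the adjacency of $G-i$, so irreducibility is precisely connectivity of $G-i$; this is sound (one small point of care: with $c$ only slightly above $\Delta$ some eigenvalues $c-\mu$ of $M$ can be negative, but since $\rho(M)$ must itself be an eigenvalue of the nonnegative matrix $M$ you still get $\rho(M)=c-\lambda$, or you can simply take $c\geq 2\Delta$). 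The trade-off: your argument delivers simplicity of $\lambda$ and hence uniqueness of $f_i$ up to sign for free, which the paper has to extract separately (Theorem \ref{core tool}, and Theorem \ref{uniqueness of solutions} in the weighted setting), whereas the paper's variational argument is the one that survives the generalizations it needs---edge and vertex weights that may vanish, and the $\ell_\infty$ ``spread'' functional of Proposition \ref{main tool for spread}, where a Perron--Frobenius matrix argument is not directly available. Your second bullet (expanding $L_i f_i=\lambda f_i$ at $u$ with the lift $f_i(i)=0$ and using $\lambda>0$, $f_i(u)>0$) is essentially the same argument as the paper's Theorem \ref{core weighted result part 2}, specialized to unit weights.
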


By the above results, for each $i$ one can construct a spanning tree $T_i$ of $G$ by connecting each vertex of $V_i$ to the neighbor that minimizes $f_i$.
We will call the unique path in $T_i$ from $j$ to $i$ the \emph{spectral path from $j$ to $i$}, which is not necessarily the same collection of edges and vertices as the spectral path from $i$ to $j$.
Steinerberger \cite{S} gave ample empirical evidence that a spectral path is commonly also a shortest path between two vertices, and frequently has length that exceeds the distance between the endpoints by a small bound.
The \emph{symmetric spectral path between $i$ and $j$} is the shorter of the spectral path from $i$ to $j$ and the spectral path from $j$ to $i$ (if they are the same length, chose among them arbitrarily).

Using a spectral path to find a short (if not shortest) path between vertices in a graph was motivated by Steinerberger as a discretization of a process in partial differential equations (PDE) about which several important questions, like the ``Hot Spots conjecture,'' remain open.
The hope is that a better understanding of spectral paths in graphs might possibly translate into new results in PDE.
Steinerberger mentioned many different ideas that deserved exploration---in \cite{S} there are six open problems in Section 2.6 and two more in Section 3.3.  
The main result of this paper is to resolve a majority of these questions, which we summarize as a single question next.

\begin{quest}[\cite{S}]\label{motivating question}
Are there graphs such that, for some pair of vertices $i \neq j$, the spectral path from $i$ to $j$ is $c$-times the distance between $i$ and $j$ for any $c >1$?  
This question can be weakened in four senses.  
First, what if we restrict to planar graphs?  
Second, what if vertices $i$ and $j$ are chosen randomly (in other words, does the bound hold for all but an asymptotically-small subset of pairs $\{i,j\}$)?  
Third, can we at least bound the length of the symmetric spectral path? Finally, can the length of a spectral path be bounded as a multiple of the diameter of the space?
\end{quest}

We will give a negative answer to Question \ref{motivating question} (including all four relaxations) by constructing families of graphs where spectral paths are unboundedly long between pairs of vertices at bounded distance apart.

We also have a positive result, which partially answer Question 6 of Section 2.6 in \cite{S}, which asks for ``effective variants'' of Steinerberger's 
set-up.
Our approach to achieving a positive result is to incorporate the field of \emph{graph curvature}, which is the study of discretizations of various algebraic and geometric definitions of a curved space.
Our result connects the distance between two vertices with the length of a path formed by locally-optimal choices over a spectral function, although we use a different spectral formulation than (\ref{f_i as optimization problem}).
In particular, we replace the Fiedler vector of the Laplacian with the ``spread'' of a graph, which we will define in Section \ref{pos result sec} (its connection to the Fiedler vector of a Laplacian is described in Section 1.3 of \cite{Y_Pos}).

Of the eight questions stated in \cite{S}, this collection of results leaves only one fully open: Question 4 of Section 2.6 in \cite{S} asks about the computational efficiency of spectral paths.
%We would add to this question by stating our interest in understanding the computational complexity of graph curvature: what is the complexity of computing the spread or computing the Ricci curvature of a graph?

We will construct the family of graphs that answer Question \ref{motivating question} in Section \ref{neg results sec}.
We will prove our positive result and ask a question that could lead to another positive result in Section \ref{pos result sec}.

%%%%%%%%%%%%%%%%%%%%%%%%%%%%%%%%%%%%%%%%%%%%%%%%%%%%%%%%%%%%%%%%%%%%%%%%%%%%%%%%%%%%%%%%%%%%%%%%%%%%%%%%%%%%%%%%%%%%%%%%%%%%%%%
%%%%%%%%%%%%%%%%%%%%%%%%%%% CONSTRUCTIONS %%%%%%%%%%%%%%%%%%%%%%%%%%%%%%%%%%%%%%%%%%%%%%%%%%%%%%%%%%%%%%%%%%%%%%%%%%%%%%%%%%%%%
%%%%%%%%%%%%%%%%%%%%%%%%%%%%%%%%%%%%%%%%%%%%%%%%%%%%%%%%%%%%%%%%%%%%%%%%%%%%%%%%%%%%%%%%%%%%%%%%%%%%%%%%%%%%%%%%%%%%%%%%%%%%%%%
\section{Constructions}\label{neg results sec}

The empirical results in \cite{S} showed that the spectral paths were short on random graphs; the spectral paths had small inefficiencies on ``deterministic graphs.''
Our families of graphs will be highly structured.
Our approach to analyzing the affect of these structures on spectral paths will be common among the different graph families we consider.
Thus, the core arguments that will be repeated are combined and presented in Section \ref{weights subsec}.
Although only a subsection, Section \ref{weights subsec} is by far the largest part of this paper and with the most sophisticated arguments.
We construct three graph families, which are presented in each of Sections \ref{const 1 subsec}, \ref{const 2 subsec}, and \ref{const 3 subsec}, respectively.

%%%%%%%%%%%%%%%%%%%%%%%%%%%%%%%%%%%%%%%%%
\subsection{Using Symmetries to Simulate Weights}\label{weights subsec}
A graph automorphism is a bijection $\pi:V \rightarrow V$ such that $xy \in E$ if and only if $\pi(x)\pi(y) \in E$.

\begin{theorem}\label{core tool}
The eigenvector $f_i$ is the unique eigenvector of $L_i$ where each coordinate is nonnegative.  This implies that for any graph automorphism $\pi$ that is fixed on $i$, we have that $f_i$ is invariant under $\pi$.
\end{theorem}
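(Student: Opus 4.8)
The plan is to establish the claim in two steps that mirror the two sentences of the statement. First I would prove that $f_i$ is the unique (up to scaling) nonnegative eigenvector of $L_i$, and then I would deduce the automorphism-invariance as a corollary of uniqueness.

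For the first part, the natural tool is the Perron--Frobenius theorem applied to a suitable nonnegative matrix. The matrix $L_i = D_i - A_i$ is not itself nonnegative because of the diagonal, but I would shift it: since $L_i$ is a principal submatrix of the Laplacian of a connected graph with at least one deleted row, it is positive definite, so all its eigenvalues are positive; choosing $c$ larger than the maximum degree, the matrix $M = cI - L_i$ is entrywise nonnegative. Moreover, because $G-i$ is connected (this is the standing hypothesis under which Theorem \ref{core result} and the surrounding discussion operate) and $i$ has at least one neighbor, $M$ is irreducible — the off-diagonal support of $M$ is exactly the adjacency structure of $G$ restricted to $V_i$, which is connected. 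Perron--Frobenius then gives that the largest eigenvalue of $M$ is simple and its eigenspace is spanned by a strictly positive vector; translating back, the smallest eigenvalue of $L_i$ is simple with a strictly positive eigenvector, and every other eigenvector must have a sign change (otherwise it would be a second nonnegative eigenvector, forcing orthogonality between two positive vectors, which is impossible). This identifies $f_i$ as the unique nonnegative eigenvector up to scaling, as claimed — and incidentally gives a cleaner route to the first bullet of Theorem \ref{core result}.

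For the second part, let $\pi$ be a graph automorphism fixing $i$. Then $\pi$ permutes $V_i$, and the associated permutation matrix $P$ satisfies $P L_i P^{-1} = L_i$ because $\pi$ preserves both degrees and adjacencies and fixes the deleted index. Consequently, if $f_i$ is the nonnegative eigenvector of $L_i$ for its smallest eigenvalue $\lambda$, then $P f_i$ is also an eigenvector of $L_i$ for $\lambda$, and it is again nonnegative (permuting coordinates of a nonnegative vector keeps it nonnegative). By the uniqueness established in the first part, $P f_i = t f_i$ for some scalar $t > 0$; comparing norms (or comparing the sum of entries, or noting $P^m = I$ for $m$ the order of $\pi$) forces $t = 1$, i.e. $f_i$ is invariant under $\pi$.

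The only real obstacle is making sure the irreducibility hypothesis is cleanly in force: Perron--Frobenius simplicity needs $M$ irreducible, which is exactly the connectedness of $G-i$. Since the excerpt explicitly restricts attention to the case where $G-i$ is connected, this is available, but I would state it explicitly so the argument is self-contained. Everything else is routine linear algebra, so I expect the write-up to be short.
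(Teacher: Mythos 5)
Your proposal is correct, and its second half (conjugating $L_i$ by the permutation matrix of $\pi$ and invoking uniqueness, with the scalar pinned down by norm preservation) is essentially the paper's argument, stated a bit more explicitly. The first half, however, takes a genuinely different route. The paper does not re-prove positivity: it quotes Theorem \ref{core result} (Steinerberger's result that $f_i$ is strictly positive on $V_i$ when $G-i$ is connected), notes that $L_i$ is real symmetric so its eigenvectors are pairwise orthogonal, and concludes uniqueness because no nonzero nonnegative vector can be orthogonal to a strictly positive one --- two lines in total. You instead derive positivity and simplicity from scratch via Perron--Frobenius applied to $M = cI - L_i$ with $c$ at least the maximum degree, using connectedness of $G-i$ for irreducibility, and then finish with the same orthogonality argument. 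Your route is heavier but self-contained: as you note, it re-proves the first bullet of Theorem \ref{core result}, and it also gives simplicity of the smallest eigenvalue of $L_i$, a fact the paper only establishes later and in weighted generality by a different argument (Theorem \ref{uniqueness of solutions}). You are also right to flag the hypothesis that $G-i$ is connected, which the statement of Theorem \ref{core tool} leaves implicit but which both your proof (irreducibility) and the paper's proof (via Theorem \ref{core result}) require. One cosmetic remark: the positive-definiteness of $L_i$ is not needed for your shift --- entrywise nonnegativity of $M$ only requires $c$ to dominate the diagonal --- and since $M$ is symmetric the Perron root is automatically its largest eigenvalue, so the translation back to the smallest eigenvalue of $L_i$ is immediate.
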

\begin{proof}
It is folklore that because $L$ is symmetric and real (and therefore Hermitian), its eigenvectors are real-valued and pairwise orthogonal.
The assumptions of the previous sentence also apply to $L_i$, and therefore so does the conclusions.
By Theorem \ref{core result} $f_i$ is positive over all coordinates in $V_i$.
Any nonzero real vector that is orthogonal to $f_i$ must have a negative coordinate, implying that $f_i$ is the unique nonnegative eigenvector.
If $\pi$ is a graph automorphism that fixes $i$, then $f_i \circ \pi$ is a positive eigenvector of $L_i$ (with the same eigenvalue as $f_i$).
By the uniqueness of $f_i$, this implies $f_i \circ \pi = f_i$.
\end{proof}

In this subsection, we will show how to generalize the current set up to account for both vertex weights and edge weights.
The edge weights take the form of a symmetric function $w_E: V \times V \rightarrow \mathbb{R}_{\geq 0}$ that satisfies $w_E(u,v) = 0$ if $uv \notin E$.
The vertex weights take the form of a function $w_V:V \rightarrow \mathbb{R}_{\geq 0}$ with at least one positive value.
For a function $g:V \rightarrow \mathbb{R}$, we define $\|g\|_w := \sqrt{\sum_{u \in V} w_V(u)g(u)^2}$.
In the weighted version, the function $f_i$ is a function that solves the optimization problem 
\begin{equation}\label{f_i as weighted optimization problem}
 \min_{\|g\|_w = 1, 0 = g(i)} \sum_{jk \in E} w_E(j,k) |g(k) - g(j)|^2 . 
\end{equation}
It should be pointed out that we have not proven that $f_i$ is unique (although we will prove this under modest assumptions in Theorem \ref{uniqueness of solutions}) so $f_i$ is to be treated as some arbitrary solution.

Observe that in the above statement, the original unweighted version can be recovered when the weight of each edge and vertex is set to $1$.
Importantly, we still consider the length of a path to be the number of edges it contains.
Because this choice is atypical of the field, let us emphasize that the length of a path is independent of the weight functions.
When computing $f_i$, we can assume that all edges have positive weight, as deleting them will not affect (\ref{f_i as weighted optimization problem}).
However, they still play a role in determining the distance between vertices.

Question \ref{motivating question} is not interesting for weighted graphs, as any induced path in any graph can be made a symmetric spectral path by giving the corresponding edges sufficiently large weight.
We introduce weights because the spectral paths of some large unweighted graphs can be determined by reducing the problem to smaller graphs with weights.

Let $u \sim_i v$ denote the relation between vertices in $u,v \in V_i$ that there exists a graph automorphism $\pi$ that fixes $i$ and $\pi(u) = v$.
Because graph automorphisms are closed under inverses and compositions, $\sim_i$ forms an equivalence relation.
We construct weighted graph $G_i$ from $G$ by creating a bijection between the vertices of $G_i$ and the $\sim_i$-equivalence classes of $G$.
Let $\phi_i :V(G) \rightarrow V(G_i)$ be the mapping such that $\phi_i$ maps each vertex to the equivalence class containing that vertex.
The weight functions of $G_i$ are defined as follows: $w_V(u) = |\phi_i^{-1}(u)|$ and $w_E(uv)$ is the number of edges with an endpoint in $\phi^{-1}(u)$ and the other endpoint in $\phi^{-1}(v)$.
Let $\widehat{f}_i$ be a solution to the optimization problem (\ref{f_i as weighted optimization problem}) over $G_i$ with special vertex $\phi_i(i)$.
We omit the proof to the following theorem, as it clearly follows from Theorem \ref{core tool} and the construction of $G_i$.

\begin{theorem}\label{reduction theorem}
If each $\sim_i$-equivalence class of $G$ contains no edges, then we can construct\footnote{We allow the non-uniqueness of $f_i$ to correspond with the possibly non-uniqueness of $\widehat{f}_i$.} $f_i$ from $\widehat{f}_i$ by setting $f_i(j) = \widehat{f}_i(\phi_i(j))$ for each $j \in V$.
\end{theorem}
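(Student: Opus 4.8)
The plan is to verify that the unweighted problem (\ref{f_i as optimization problem}) on $G$ and the weighted problem (\ref{f_i as weighted optimization problem}) on $G_i$ have the same optimal value, and that any solution of the latter pulls back along $\phi_i$ to a solution of the former. The link between the two problems is Theorem \ref{core tool}: because $f_i$ is invariant under every automorphism of $G$ that fixes $i$, it is constant on each $\sim_i$-equivalence class, and hence factors as $f_i = g\circ\phi_i$ for a function $g:V(G_i)\to\mathbb{R}$. So a solution of (\ref{f_i as optimization problem}) is ``the same data'' as a function on $G_i$, and it remains only to check that the norm and the Dirichlet-type energy are transported correctly by the multiplicities $w_V$ and $w_E$.

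First I would record how the objective and constraint of (\ref{f_i as optimization problem}) transform under pullback. For any $h:V(G_i)\to\mathbb{R}$, the definition $w_V(u)=|\phi_i^{-1}(u)|$ gives $\|h\circ\phi_i\|_2^2=\sum_{u\in V(G_i)}|\phi_i^{-1}(u)|\,h(u)^2=\|h\|_w^2$. Here the hypothesis that no $\sim_i$-class contains an edge is exactly what is needed for the energy: it forces every edge of $G$ to have its two endpoints in distinct classes, so grouping the edges of $G$ by the unordered pair of classes they join and using that $w_E(uv)$ counts the edges between $\phi_i^{-1}(u)$ and $\phi_i^{-1}(v)$ yields $\sum_{jk\in E(G)}|h(\phi_i(k))-h(\phi_i(j))|^2=\sum_{uv\in E(G_i)}w_E(uv)\,|h(v)-h(u)|^2$. (The same hypothesis makes $G_i$ loopless, so the construction sits inside the simple-graph-with-weights framework.) Finally, since any automorphism fixing $i$ sends $i$ to $i$, the vertex $i$ is alone in its class, so $w_V(\phi_i(i))=1$ and the constraint $h(\phi_i(i))=0$ is equivalent to $(h\circ\phi_i)(i)=0$. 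Consequently, pullback sends feasible points of (\ref{f_i as weighted optimization problem}) to feasible points of (\ref{f_i as optimization problem}) with the same objective value, so the optimum of (\ref{f_i as optimization problem}) is at most that of (\ref{f_i as weighted optimization problem}).

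For the reverse inequality I would note that (\ref{f_i as optimization problem}) attains its minimum --- its feasible set is the unit sphere of the subspace $\{g:g(i)=0\}$, hence compact, and the objective is continuous --- and take a minimizer $f_i$. By Theorem \ref{core tool} it equals $g\circ\phi_i$ for some $g:V(G_i)\to\mathbb{R}$, and by the identities just established $g$ is feasible for (\ref{f_i as weighted optimization problem}) with the same (minimal) objective value; hence the optimum of (\ref{f_i as weighted optimization problem}) is at most that of (\ref{f_i as optimization problem}). The two optima therefore coincide. Given now any solution $\widehat f_i$ of (\ref{f_i as weighted optimization problem}), the pullback $\widehat f_i\circ\phi_i$ is feasible for (\ref{f_i as optimization problem}) and attains the common optimum, so it is a legitimate choice of $f_i$; and this is exactly $f_i(j)=\widehat f_i(\phi_i(j))$, with the indeterminacy of the two sides matching as noted in the footnote.

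I do not anticipate a genuine obstacle: once Theorem \ref{core tool} is in hand, the content is the routine fact that a symmetry-invariant function on $G$ is a function on the quotient $G_i$, with squared $\ell^2$-norm and Dirichlet energy rescaled by the class sizes $w_V$ and the cross-class edge counts $w_E$. The only place demanding any care is the edge bookkeeping in the energy identity, which is precisely where the hypothesis ``each $\sim_i$-equivalence class contains no edges'' enters, guaranteeing that each edge of $G$ is accounted for by exactly one term $w_E(uv)$ with $u\neq v$.
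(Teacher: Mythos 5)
Your argument is correct and is exactly the route the paper intends: the paper omits the proof, saying it ``clearly follows from Theorem \ref{core tool} and the construction of $G_i$,'' and your write-up is precisely that argument fleshed out --- invariance of the unweighted minimizer under automorphisms fixing $i$ makes it class-constant, and the quotient map transports norm, energy, and the constraint $g(i)=0$ so that the two optima coincide and weighted solutions pull back to solutions of (\ref{f_i as optimization problem}). The only quibble is cosmetic: the no-edges-within-a-class hypothesis is used for clean edge bookkeeping (each edge of $G$ is counted by exactly one cross-class weight $w_E(uv)$ with $u\neq v$), which you identify correctly, and, as with Theorem \ref{core tool}, the standing assumption that $G-i$ is connected is implicitly in force.
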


We now proceed to study the function $f_i$ in this weighted context; for the rest of the subsection we assume $G$ is a weighted graph.
The function $w_E$ is represented in matrix form as $W_E$ and the function $w_V$ is represented as diagonal matrix $W_D$.
The degree of a vertex $k$ is defined to be $d(k) := \sum_{j \in V}w_E(kj)$.
The Laplacian for a graph with edge weights is $L = D - W_E$.

We define a function $c$ when $\|g\|_w >0$ as 
\begin{equation}\label{defining c}
 c(w_V, w_E, g) = \frac{\sum_{jk \in E} w_E(k,j) |g(k) - g(j)|^2}{\|g\|_w^2}.
\end{equation}
A function that solves optimization problem (\ref{f_i as weighted optimization problem}) for fixed $w_V, w_E$ is characterized as the function $g/\|g\|_w$ where $g$ minimizes $c(w_V, w_E, g)$ under the criteria $g(i) = 0$.
Let $g_i \in \mathbb{R}^{n-1}$ be the vector representation of $g$ restricted to $V_i$, and let $W_{V,i}$ be $W_V$ with row $i$ and column $i$ deleted.
Observe that when $g(i) = 0$ we have
$$c(w_V, w_E, g) = \frac{g_i^T L_i g_i}{g_i^T W_{V,i} g_i}. $$
We say that a graph is ``positively connected'' if there exists a path between any two vertices using only edges with positive weights.

\begin{prop}
If $G$ is positively connected, then $L_i$ is positive definite.
\end{prop}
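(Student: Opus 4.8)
The plan is to write the quadratic form attached to $L_i$ in the familiar ``sum over edges'' shape, conclude positive semidefiniteness immediately, and then use positive connectivity to rule out a nonzero null vector.

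First I would recall the standard identity for the weighted Laplacian $L = D - W_E$: for every $x \in \mathbb{R}^n$,
$$ x^T L x = \sum_{jk \in E} w_E(j,k)\,\bigl(x(k) - x(j)\bigr)^2 , $$
which follows by expanding $x^T D x = \sum_k d(k)\,x(k)^2 = \sum_{jk \in E} w_E(j,k)\bigl(x(j)^2 + x(k)^2\bigr)$ and $x^T W_E x = \sum_{jk \in E} 2\,w_E(j,k)\,x(j)x(k)$. In particular $L$ is positive semidefinite. Given $y \in \mathbb{R}^{n-1}$ indexed by $V_i$, let $x \in \mathbb{R}^n$ be its lift with $x(i) = 0$ and $x|_{V_i} = y$. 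Deleting row $i$ and column $i$ of $L$ removes exactly the terms of $x^T L x$ that involve the coordinate $x(i)$, and those vanish because $x(i) = 0$; hence
$$ y^T L_i y = x^T L x = \sum_{jk \in E} w_E(j,k)\,\bigl(x(k) - x(j)\bigr)^2 \;\geq\; 0 , $$
so $L_i$ is positive semidefinite.

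To upgrade to positive definiteness, suppose $y^T L_i y = 0$. Then every edge $jk$ with $w_E(j,k) > 0$ satisfies $x(j) = x(k)$, i.e.\ $x$ is constant along every positive-weight edge. Since $G$ is positively connected, there is a path of positive-weight edges between any two vertices, so $x$ is constant on all of $V$; as $x(i) = 0$ this forces $x \equiv 0$ and hence $y = 0$. Therefore $L_i$ has trivial kernel, and being positive semidefinite it is positive definite.

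I do not expect a genuine obstacle here; the one place where the hypothesis is actually needed — rather than mere connectedness of $G$ — is the final step, since zero-weight edges contribute nothing to the quadratic form and so cannot be used to propagate the equality $x(j) = x(k)$. It would be worth a sentence making explicit that ``positively connected'' is exactly what is required for this propagation argument.
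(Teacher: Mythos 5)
Your proof is correct and takes essentially the same route as the paper: both express $y^T L_i y$ as the edge-sum $\sum_{jk \in E} w_E(j,k)(x(k)-x(j))^2$ after lifting with $x(i)=0$, and both use positive connectivity along a path from $i$ to force a strictly positive contribution; you phrase it contrapositively (vanishing form propagates constancy, hence $x\equiv 0$), while the paper directly exhibits a first positive-weight edge where the values differ. The distinction is purely presentational.
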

\begin{proof}
If $M$ is symmetric and real, then for $M$ to be positive definite it is necessary and sufficient that for any nonzero real vector $x$ we have $x^T M x > 0$.
Let us interpret $x$ as a function from $V_i$ to $\mathbb{R}$ instead of a vector $\mathbb{R}^{V_i}$.
Next, we lift $x$ to $x_*$ which domain $V$ by setting $x_*(i) = 0$.
Under this setting, we have $x^T L_i x = \sum_{jk \in E} w_E(j,k) (x_*(k) - x_*(j))^2$.
Because $x$ is a nonzero vector, there is some index $j$ such that $x(j) \neq 0$.
By the assumption, there exists a path $i=u_1,u_2, \ldots, u_k=j$ using only positively weighted edges.
Let $t$ be the smallest index such that $x(u_t) \neq x(u_{t+1})$.
Therefore 
\begin{eqnarray*}
x^T L_i x &=& \sum_{jk \in E} w_E(j,k) (x_*(k) - x_*(j))^2\\
	& \geq & w_E(u_t, u_{t+1})(x_*(u_t) - x_*(u_{t+1}))^2\\
	&  > & 0.
\end{eqnarray*}
\end{proof}

If $L_i$ is positive definite, then because it is also real and symmetric, it has a Choleskey decomposition $L_i = U_i^T U_i$, where $U_i$ is real and invertible.
For $g_i \in \mathbb{R}^{V_i}$, let $g_i' = U_i g_i$ so that 
$$c(w_V, w_E, g) = \frac{\|g_i'\|_2^2}{g_i'^T (U_i^T)^{-1}W_{V,i} U_i^{-1} g_i'}.$$
Let $L_i' = (U_i^{-1})^T W_{V,i} U_i^{-1}$, which is real and symmetric.
Applying the Courant-Fischer theorem, $c(w_V, w_E, g)$ is minimized by choosing $g_i'$ to be the dominant eigenvector of $L_i'$.
Reverting the change of variables gives the following statement.

\begin{lemma} \label{matrix representation}
When $G$ is positively connected, the solutions that optimize (\ref{f_i as weighted optimization problem}) correspond with the dominant right-eigenvectors of $L_i^{-1}W_{V,i}$.
\end{lemma}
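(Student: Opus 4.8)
The plan is to follow the chain of reductions already assembled in the paragraphs preceding Lemma~\ref{matrix representation} and then to carefully revert the change of variables. First I would restate the optimization problem (\ref{f_i as weighted optimization problem}) in quotient form: since the objective and the constraint $\|g\|_w = 1$ are both homogeneous of degree two away from $g \equiv 0$ while $g(i) = 0$ is linear, a normalized $g$ solves (\ref{f_i as weighted optimization problem}) if and only if its restriction $g_i$ to $V_i$ minimizes the generalized Rayleigh quotient $c(w_V,w_E,g) = g_i^T L_i g_i / (g_i^T W_{V,i} g_i)$ over all $g_i$ with $g_i^T W_{V,i} g_i > 0$; one then recovers $g$ by lifting $g_i$ with $g(i)=0$ and rescaling so that $\|g\|_w = 1$. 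This is exactly the identity recorded just before the preceding Proposition.

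Next I would invoke that Proposition: positive connectedness forces $L_i$ to be positive definite, hence $L_i$ admits a Cholesky factorization $L_i = U_i^T U_i$ with $U_i$ real and invertible. Substituting $g_i' = U_i g_i$ turns the generalized quotient into $\|g_i'\|_2^2 / \bigl(g_i'^T L_i' g_i'\bigr)$ with $L_i' := (U_i^{-1})^T W_{V,i} U_i^{-1}$ real, symmetric, and positive semidefinite. Minimizing this is the same as maximizing the ordinary Rayleigh quotient $g_i'^T L_i' g_i' / \|g_i'\|_2^2$, so by the Courant--Fischer theorem the minimizers are precisely the eigenvectors of $L_i'$ belonging to its largest eigenvalue $\lambda$, and $\lambda > 0$ exactly when the problem is feasible (i.e.\ when $W_{V,i} \neq 0$).

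Finally I would undo the substitution. If $L_i' g_i' = \lambda g_i'$ and $g_i = U_i^{-1} g_i'$, then $(U_i^{-1})^T W_{V,i} g_i = \lambda U_i g_i$; left-multiplying by $U_i^T$ gives $W_{V,i} g_i = \lambda U_i^T U_i g_i = \lambda L_i g_i$, that is, $L_i^{-1} W_{V,i} g_i = \lambda g_i$, and reading these equalities backwards shows every eigenvector of $L_i^{-1} W_{V,i}$ for $\lambda$ pulls back to a largest-eigenvalue eigenvector of $L_i'$. Since $L_i^{-1} W_{V,i} = U_i^{-1} L_i' U_i$ is similar to the symmetric matrix $L_i'$, its spectrum is real and $\lambda$ is genuinely its dominant eigenvalue; also $g_i^T W_{V,i} g_i = \lambda\, g_i^T L_i g_i > 0$ automatically for such $g_i$, so these vectors are admissible in the quotient. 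Combining with the first paragraph, the normalized optima of (\ref{f_i as weighted optimization problem}) correspond to the dominant right-eigenvectors of $L_i^{-1} W_{V,i}$, each lifted by $g(i) = 0$ and rescaled to $\|g\|_w = 1$.

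The algebra of the change of variables is routine; the only genuine care needed is bookkeeping -- that ``dominant'' (largest eigenvalue) for $L_i'$ matches ``dominant'' for $L_i^{-1}W_{V,i}$ under the similarity, that $W_{V,i}$ may be singular so the quotient should be read on the cone where the denominator is positive, and the degenerate case where no feasible $g$ exists. I do not anticipate a substantial obstacle.
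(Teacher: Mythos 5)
Your proposal is correct and follows essentially the same route as the paper, whose ``proof'' of Lemma~\ref{matrix representation} is precisely the preceding discussion: reduce to the generalized Rayleigh quotient $g_i^T L_i g_i / (g_i^T W_{V,i} g_i)$, use positive definiteness of $L_i$ to take the Cholesky factorization $L_i = U_i^T U_i$, apply Courant--Fischer to $L_i' = (U_i^{-1})^T W_{V,i} U_i^{-1}$, and revert the change of variables via the similarity $L_i^{-1}W_{V,i} = U_i^{-1} L_i' U_i$. You simply spell out the reversion algebra and the bookkeeping (singular $W_{V,i}$, matching notions of ``dominant'') that the paper leaves implicit.
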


With this set up, we are able to generalize Theorem \ref{core result} to weighted graphs under modest assumptions.
For ease of presentation, we split it into two statements.

\begin{theorem}\label{core weighted result part 1}
Assume $G-i$ is positively connected.
Let $f_i$ be a function that optimizes (\ref{f_i as weighted optimization problem}).
If $f_i(u) > 0$ for some vertex $u$, then $f_i(u') > 0$ for all $u' \in V_i$.  
%By multiplying all values of $f_i$ by $-1$, we will assume for the rest of the paper that $f_i$ is positive over $V_i$.
\end{theorem}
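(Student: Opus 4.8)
The plan is to feed Lemma~\ref{matrix representation} into the Perron--Frobenius theorem; the only real work is controlling the effect of vertices of weight zero. First I would record the reductions that make Lemma~\ref{matrix representation} applicable: deleting the zero-weight edges changes neither (\ref{f_i as weighted optimization problem}) nor the hypothesis, and the two degenerate cases are easy --- if no vertex of $V_i$ has positive weight the feasible set of (\ref{f_i as weighted optimization problem}) is empty and the statement is vacuous, while if $i$ has no positively weighted neighbour the objective never involves $g(i)$ and its minimizers are exactly the functions constant on $V_i$, so $f_i$ is constant and positive there. In the remaining case $G$ is positively connected, so by Lemma~\ref{matrix representation} the restriction $g_i$ of $f_i$ to $V_i$ is a dominant right-eigenvector of $M:=L_i^{-1}W_{V,i}$, and it suffices to prove that every such eigenvector has all coordinates of the same strict sign.

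Next I would show that $L_i^{-1}$ is entrywise strictly positive. Since $L_i$ is symmetric, positive definite, and has off-diagonal entries $-w_E(j,k)\le 0$, it is a Stieltjes matrix, so $L_i^{-1}\ge 0$ entrywise; the off-diagonal nonzero pattern of $L_i$ is the graph $G-i$ with its positively weighted edges, which is connected, so $L_i$ is irreducible and hence $L_i^{-1}>0$ entrywise (alternatively, expand $L_i^{-1}=s^{-1}\sum_{k\ge 0}(I-L_i/s)^k$ for $s$ large and observe that every coordinate is reached by a walk in $G-i$). Therefore the $(j,k)$ entry of $M$ equals $(L_i^{-1})_{jk}\,w_V(k)$, which is positive exactly when $w_V(k)>0$. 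Ordering $V_i$ so that the vertices of positive weight come first, $M$ becomes block lower-triangular,
\[
 M=\begin{pmatrix} P & 0\\ Q & 0\end{pmatrix},
\]
where $P$ is square with all entries positive and $Q$ is entrywise positive (if every vertex of $V_i$ has positive weight, then $Q$ and the zero columns are simply absent).

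Finally I would read off the dominant eigenspace from the block form. The nonzero eigenvalues of $M$ are precisely those of $P$, and since $P$ is a nonempty strictly positive matrix, Perron--Frobenius gives that its spectral radius $\rho:=\rho(P)$ is a positive simple eigenvalue with a strictly positive eigenvector $v_P$, and $\rho$ strictly exceeds the modulus of every other eigenvalue of $P$; the remaining eigenvalues of $M$ are all equal to $0<\rho$, so $\rho$ is the unique dominant eigenvalue of $M$. Writing $Mv=\rho v$ with $v=(v_1,v_2)$ in block coordinates gives $Pv_1=\rho v_1$, hence $v_1\in\operatorname{span}(v_P)$, and then $v_2=\rho^{-1}Qv_1$ is determined; so the dominant eigenspace of $M$ is one-dimensional, spanned by $v^\ast:=(v_P,\ \rho^{-1}Qv_P)$, which is entrywise strictly positive because $v_P>0$ and $Q$ is entrywise positive. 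Consequently $g_i=t\,v^\ast$ for some scalar $t\ne 0$, and if $f_i(u)=g_i(u)>0$ for some $u$ then $t>0$, so $f_i(u')=t\,v^\ast(u')>0$ for every $u'\in V_i$. (The one-dimensionality seen here also foreshadows the uniqueness claim of Theorem~\ref{uniqueness of solutions}.)

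The step I expect to be the real obstacle is precisely this argument in the presence of zero vertex weights: then $M$ is not a positive matrix, only block-triangular with a positive diagonal block, so one must show both that the dominant eigenvalue still lives in that block and that positivity propagates through $Q$ to the remaining coordinates. If one assumes all vertex weights positive --- as is automatic for the reduced graphs $G_i$ of Theorem~\ref{reduction theorem} --- then $M$ is itself entrywise positive and the whole proof collapses to a single application of Perron--Frobenius.
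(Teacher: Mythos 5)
Your proof is correct, but it takes a genuinely different route from the paper. The paper argues variationally, directly from the optimization problem (\ref{f_i as weighted optimization problem}): assuming the optimizer has a nonempty zero set $Z$ in $V_i$, raising the values on $Z$ by a small $\epsilon$ strictly decreases the Rayleigh-type quotient $c(w_V,w_E,\cdot)$ (using a positive edge from $Z$ to $V_i\setminus Z$ supplied by positive connectivity of $G-i$); and if both signs occur, replacing $f_i$ by $|f_i|$ gives another optimizer that falls into the first case. That argument uses only the hypothesis that $G-i$ is positively connected. Your route instead goes through Lemma \ref{matrix representation}, entrywise positivity of $L_i^{-1}$ (Stieltjes/M-matrix plus irreducibility, or your Neumann-series expansion), and Perron--Frobenius applied to the block form of $L_i^{-1}W_{V,i}$ when some vertex weights vanish; this forces you to dispose separately of the degenerate cases in which the lemma does not apply ($i$ with no positively weighted neighbor, or no positive vertex weight on $V_i$), which you do correctly, and your block computation handling the zero columns is sound. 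What your approach buys is strictly more: the dominant eigenspace comes out one-dimensional, so you obtain Theorem \ref{uniqueness of solutions} as a byproduct, whereas the paper proves uniqueness later by a separate contradiction argument that relies on this theorem. What the paper's approach buys is economy: it is more elementary (no inverse-positivity or Perron--Frobenius machinery) and applies verbatim under the stated hypothesis without case-splitting.
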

\begin{proof}
We prove the first statement by contradiction.
We begin by assuming that $f_i$ is nonnegative.
Let $Z = \{j : f_i(j) = 0, j \neq i\}$ and assume that $Z \neq \emptyset$.  
Because $G-i$ is positively connected, there exists an edge with positive weight with one endpoint in $Z$ and the other endpoint in $V_i \setminus Z$.
Suppose we modify $f_i$ by increasing $f_i(j)$ for each $j \in Z$ by some $\epsilon>0$, and consider the affect that change would have on $c(w_V, w_E, f_i)$. 
Specifically, consider the definition of $c$ in (\ref{defining c}): the denominator would increase, and when $\epsilon$ is sufficiently small the numerator would decrease.
This contradicts the choice of $f_i$.

So now suppose the sets $N =  \{j : f_i(j) < 0, j \neq i\}$ and $P =  \{j : f_i(j) > 0, j \neq i\}$ are each nonempty.
We define function $f_i^+ = |f_i|$.
Observe that $\|f_i^+\|_w = \|f_i\|_w$.
If there is an edge with positive weight with an endpoint in $N$ and another endpoint in $P$, then $c(w_V, w_E, f_i^+) <  c(w_V, w_E, f_i)$, contradicting the choice of $f_i$.
Therefore, because $G-i$ is positively connected, there are edges with positive weights connecting $P$ to $Z$ and $Z$ to $N$, and thus $N \neq \emptyset$.
Moreover, we have $c(w_V, w_E, f_i^+) =  c(w_V, w_E, f_i)$, and $f_i^+$ is a second solution to the optimization problem (\ref{f_i as weighted optimization problem}).
But this is a contradiction, because $f_i^+$ is not a solution to the optimization problem (\ref{f_i as weighted optimization problem}), as $f_i^+$ exactly matches the case described in the previous paragraph.
\end{proof}

\begin{theorem}\label{core weighted result part 2}
Assume $G$ and $G-i$ are each positively connected.
Let $f_i$ be a function that optimizes (\ref{f_i as weighted optimization problem}).
For  $u \in V_i$, there exists a $u' \in N(u)$ (possibly $u' = i$) such that $f_i(u') \leq f_i(u)$.
Moreover, we may assume the inequality is strict if $w_V(u) > 0$.
\end{theorem}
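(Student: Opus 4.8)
The plan is to write down the generalized eigenvalue equation that $f_i$ satisfies and then read the desired inequality off a single row of it. Since $G$ is positively connected, $L_i$ is positive definite, so Lemma \ref{matrix representation} applies and any optimizer $f_i$ of (\ref{f_i as weighted optimization problem}), viewed as a vector on $V_i$, is a dominant right-eigenvector of $L_i^{-1}W_{V,i}$; rearranging, $L_i f_i = \mu\, W_{V,i} f_i$, where $\mu = c(w_V,w_E,f_i) = f_i^{T} L_i f_i$ (using $\|f_i\|_w = 1$) is strictly positive because $L_i$ is positive definite and $f_i$ does not vanish identically on $V_i$. Since in addition $G - i$ is positively connected, Theorem \ref{core weighted result part 1}, together with the sign normalization (replacing $f_i$ by $-f_i$ preserves the property of being an optimizer), lets us assume $f_i(u') > 0$ for every $u' \in V_i$.

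The second step is to expand the $u$-th coordinate of $L_i f_i$ for a fixed $u \in V_i$. A direct computation using $L = D - W_E$, $d(u) = \sum_{v \in V} w_E(u,v)$, and the lift $f_i(i) = 0$ (which turns the ``boundary'' term $w_E(u,i)f_i(u)$ into $w_E(u,i)(f_i(u) - f_i(i))$) shows that this coordinate equals $\sum_{v \in N(u)} w_E(u,v)\bigl(f_i(u) - f_i(v)\bigr)$. Here we may assume every edge has positive weight, since deleting zero-weight edges affects neither $f_i$ nor positive connectivity, so $N(u)$ is exactly $\{v : w_E(u,v) > 0\}$; and $N(u) \neq \emptyset$ because $G$ is positively connected with at least two vertices. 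Thus the eigenvalue equation at $u$ reads
$$
\sum_{v \in N(u)} w_E(u,v)\bigl(f_i(u) - f_i(v)\bigr) \;=\; \mu\, w_V(u)\, f_i(u).
$$

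The conclusion then follows by examining the sign of the right-hand side. If $w_V(u) > 0$, the right-hand side is strictly positive (as $\mu > 0$ and $f_i(u) > 0$), so some summand on the left is strictly positive; since $w_E(u,v) > 0$ for $v \in N(u)$, there is a $u' \in N(u)$ with $f_i(u') < f_i(u)$, which establishes the ``moreover'' clause. If $w_V(u) = 0$, the right-hand side is $0$, so the summands cannot all be strictly negative, and hence some $u' \in N(u)$ satisfies $f_i(u') \leq f_i(u)$. I expect no genuine obstacle; the two points needing care are the bookkeeping that collapses the minor-plus-lift into the clean local identity above, and checking that the sign conventions ($\mu > 0$ and $f_i > 0$ on $V_i$) are really in force — without them a strictly positive right-hand side would force a \emph{larger} neighbouring value instead of a smaller one.
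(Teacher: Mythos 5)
Your proof is correct and takes essentially the same route as the paper: it invokes Lemma \ref{matrix representation} to get the generalized eigenvalue identity $W_{V,i}f_i=\lambda L_i f_i$ (your $\mu=1/\lambda$, with positivity of $\mu$ from positive definiteness of $L_i$ matching the paper's argument that $\lambda>0$), uses Theorem \ref{core weighted result part 1} for the sign normalization, and reads the conclusion off the row of that equation at $u$. The only cosmetic difference is that the paper bounds the row sum by $\lambda d(u)\max_j\bigl(f_i(u)-f_i(j)\bigr)$, whereas you argue directly on the signs of the individual summands; both give the same dichotomy between the non-strict and strict cases.
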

\begin{proof}
Let $g_i \in \mathbb{R}^{V_i}$ be the vector representation of $f_i$.
Recall that $g_i$ is a $\lambda$-eigenvector of $L_i^{-1}W_{V,i}$, where $\lambda$ is the largest eigenvalue.
Moreover, $L_i^{-1}W_{V,i}$ is similar to $L_i'$.
For any vector $x \in \mathbb{R}^{V_i}$ we have $x^T L_i' x = \| U_i ^{-1} x \|_w$, and therefore $L_i'$ is positive semi-definite (positive definite if and only if each vertex has positive weight).
This implies $\lambda \geq 0$, and because $\lambda$ is the largest eigenvalue we have that $\lambda > 0$ unless $W_{V,i} = 0$.
But then the weight function $w_V$ is identically $0$, which contradicts our definition of a vertex weight function.

We now consider the equation $W_{V,i} g_i = \lambda L_i g_i$.
Applying this formula at vertex $u$ with the definition of $L_i$ we get that
\begin{eqnarray*}
0	& \leq & w_V(u) g_i(u) \\
	& = & \lambda \sum_{j \neq u} w_E(u,j)( g_i(u) - g_i(j) ) \\
	& \leq & \lambda d(u) \max_j ( g_i(u) - g_i(j) ). \\
\end{eqnarray*}
If $G$ is positively connected, then $d(u) > 0$.
Thus, $\lambda d(u) > 0$, and we conclude that $\max_j ( g_i(u) - g_i(j) ) \geq 0$.
Moreover, if $w_V(u) > 0$, then by Theorem \ref{core weighted result part 1} we have a strict inequality in the first line of the equation array above, which gives the stronger conclusion $\max_j ( g_i(u) - g_i(j) ) > 0$.
\end{proof}

Unfortunately, Theorem \ref{core weighted result part 2} can not be strengthened.
It is a simple calculus exercise directly applied to the formulation (\ref{f_i as weighted optimization problem}) that if $w_V(u) = 0$, then the value $f_i(u)$ is the average of the value of its neighbors (formally, $f_i(u) = d(u)^{-1}\sum_{j \neq u}w_E(u,j) f_i(j)$).
So, if $u$ is incident with merely a single edge $ux$, then $f_i(u) = f_i(x)$.  
On the other hand, it also follows that if $w_V(u) = 0$ and $u$ has a neighbor $x$ such that $f_i(x) > f_i(u)$, then $u$ has another neighbor $y$ such that $f_i(y) < f_i(u)$.

\begin{theorem}\label{spectral paths are well-defined}
The spectral path from $j$ to $i$ is well-defined if $G$ and $G-i$ are each positively connected and $w_V(j) > 0$.
\end{theorem}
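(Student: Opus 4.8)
The plan is to show that the greedy trajectory $j = u_0, u_1, u_2, \ldots$ obtained by repeatedly moving from the current vertex to a neighbor minimizing $f_i$ is a simple path that terminates at $i$; this is precisely the assertion that the spectral path from $j$ to $i$ is well-defined (it is the path from $j$ to $i$ recorded in $T_i$). The heart of the argument is the invariant that $f_i$ strictly decreases along the trajectory, i.e. $f_i(u_{k+1}) < f_i(u_k)$ whenever $u_k \in V_i$, which I would prove by induction on $k$ while carrying along the stronger hypothesis that the previous step was also a strict decrease.

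For the base case, since $w_V(j) > 0$ and $G, G-i$ are positively connected, Theorem \ref{core weighted result part 2} supplies a neighbor of $j$ with strictly smaller $f_i$-value, so the minimizing neighbor $u_1$ satisfies $f_i(u_1) < f_i(j)$. For the inductive step, suppose $u_k \in V_i$ and $f_i(u_k) < f_i(u_{k-1})$. If $w_V(u_k) > 0$, Theorem \ref{core weighted result part 2} again gives a neighbor of $u_k$ with strictly smaller value, so $f_i(u_{k+1}) < f_i(u_k)$. If instead $w_V(u_k) = 0$, then, as recorded in the remark following Theorem \ref{core weighted result part 2} (which uses $d(u_k) > 0$, a consequence of $G$ being positively connected), $f_i(u_k)$ is a weighted average of the $f_i$-values of its neighbors; since the neighbor $u_{k-1}$ has $f_i(u_{k-1}) > f_i(u_k)$, some neighbor of $u_k$ has $f_i$-value strictly below $f_i(u_k)$, and once more $f_i(u_{k+1}) < f_i(u_k)$. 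This closes the induction.

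Given the invariant, the trajectory repeats no vertex, hence is a simple path, and since $G$ is finite the trajectory is finite. A finite trajectory can only stop at a vertex with no assigned successor, and the only such vertex is $i$; equivalently, $f_i > 0$ on $V_i$ while $f_i(i) = 0$, so a strictly decreasing trajectory must leave $V_i$. Thus the trajectory is a simple $j$--$i$ path, and the argument did not depend on how ties among minimizing neighbors were broken, so the spectral path from $j$ to $i$ is well-defined.

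I expect the main obstacle to be exactly the weight-$0$ case of the inductive step: there Theorem \ref{core weighted result part 2} alone yields only a non-strict inequality, and the fix is to invoke the averaging identity at weight-$0$ vertices, which is available only because the inductive hypothesis guarantees we arrived from a vertex of strictly larger value. A minor point to handle cleanly is that in both cases the strict decrease is witnessed by a neighbor joined by a positively weighted edge (the averaging identity and the proof of Theorem \ref{core weighted result part 2} both produce such a witness), so the construction is insensitive to whether the minimizing-neighbor rule ranges over all neighbors or only over positively weighted ones.
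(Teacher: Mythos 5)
Your proof is correct and follows essentially the same route as the paper: strict decrease along the greedy trajectory via Theorem \ref{core weighted result part 2} at positive-weight vertices, and, at weight-zero vertices, the averaging remark combined with having arrived from a strictly larger value; the paper just packages this as a contradiction at the first "stuck" vertex (its cases (B)/(C)) rather than as a forward induction. The positively-weighted-edge subtlety you flag is handled at the same level of explicitness in the paper itself, so no gap relative to its argument.
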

\begin{proof}
Let us attempt to construct the tree $T_i$ as described in the introduction, only vertex $x$ has no arcs leaving it if $f_i(x) \geq \max_{yx \in E} f_i(y)$.
In other words, we only include an arc in $T_i$ if the corresponding value in $f_i$ strictly decreases along that arc.

Let us consider the maximal directed path in $T_i$ starting at $j$.
That is, we consider the sequence $j=j_0,j_1,j_2,\ldots,j_k$ where for $\ell < k$ we have that $j_{\ell+1}$ is the neighbor of $j_\ell$ that minimizes $f_i(j_{\ell+1})$, and $k$ is the first index that satisfies (A) $j_k = i$, (B) every neighbor $x$ of $j_k$ satisfies $f_i(x) \geq f_i(j_k)$, or (C) the vertex $y$ that minimizes $f_i(y)$ among neighbors of $j_k$ satisfies $y \in \{j_0, j_1, \ldots, j_{k-1}\}$. 
The spectral path from $j$ to $i$ is well-defined by definition when we are in case (A) above.

By way of contradiction, we assume that we are in case (B) or (C) above.
The sequence $f_i(j_0), f_i(j_1), \ldots, f_i(j_k)$ is monotone decreasing, which implies that if we are in case (C), then we are also in case (B).
So assume that every neighbor $x$ of $j_k$ satisfies $f_i(x) \geq f_i(j_k)$.
By Theorem \ref{core weighted result part 2}, this implies $w_V(j_k) = 0$.
By the assumption $w_V(j) > 0$, this implies $k \geq 1$.
Observe that because $k$ is chosen as the first index where (B) occurs, we know that $f_i(j_{k-1}) > f_i(j_{k})$ and $j_{k-1}$ is a neighbor of $j_k$.
By the discussion prior to the statement of the theorem, this implies that we can not be in case (B).
This contradicts our assumption, so we must be in case (A), and the theorem is proven.
\end{proof}

Finding a function that solves the optimization problem (\ref{f_i as weighted optimization problem}) for a fixed graph is an easy problem computationally.
But proving that a constructed function is a solution for a family of graphs is quite hard, even for unweighted graphs.
Still, we can have some control over spectral paths using the following theorem.

\begin{theorem}\label{tool to construct paths}
Fix three vertices $j,i,k$.
Suppose $G-j$ is not connected, with $i$ and $k$ in different components of $G-j$.
Under these assumptions, if $G$ and $G-i$ are each positively connected and $w_V(k) > 0$, then $f_i(k) > f_i(j)$.
Moreover, the spectral path from $k$ to $i$ will pass through $j$.
\end{theorem}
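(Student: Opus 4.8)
The plan is to localize the problem to $C$, the connected component of $G-j$ that contains $k$. Because $j$ separates $i$ from $k$ in $G$, no vertex of $V(C)$ is adjacent to $i$, and every $G$-neighbour of a vertex of $V(C)$ lies in $V(C)\cup\{j\}$. Hence a vertex of $V(C)$ at which $f_i$ is minimal sees only neighbours in $V(C)\cup\{j\}$, and the local structural results of this subsection---the impossibility of a local minimum at a positively-weighted vertex (Theorem~\ref{core weighted result part 2}) and the averaging property at a weight-zero vertex (the remark following it)---can be brought to bear to push that minimum above $f_i(j)$.

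Concretely, set $m=\min_{u\in V(C)}f_i(u)$ and $S=\{u\in V(C):f_i(u)=m\}\neq\emptyset$, and suppose for contradiction that $m\le f_i(j)$. Fix $u\in S$. Every neighbour of $u$ has $f_i$-value at least $m$---those in $V(C)$ by the choice of $m$, and the neighbour $j$ (if present) by the assumption $m\le f_i(j)$---and none of these neighbours is $i$. By Theorem~\ref{core weighted result part 2} this is incompatible with $w_V(u)>0$, so $w_V(u)=0$; then by the averaging remark, were a neighbour $x$ of $u$ to satisfy $f_i(x)>f_i(u)$, there would be a neighbour $y$ of $u$ with $f_i(y)<f_i(u)=m$, which is impossible for the same reasons. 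So every neighbour of $u$ has $f_i$-value exactly $m$, and in particular every neighbour of $u$ inside $V(C)$ lies in $S$. Since $C$ is connected, $S=V(C)$, whence $w_V$ vanishes on all of $V(C)$---contradicting $w_V(k)>0$. Therefore $m>f_i(j)$, and $f_i(k)\ge m>f_i(j)$.

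For the last sentence: the hypotheses ($G$ and $G-i$ positively connected, $w_V(k)>0$) are precisely those of Theorem~\ref{spectral paths are well-defined}, so the spectral path from $k$ to $i$ is well-defined, and by construction it is a genuine path of $G$ from $k$ to $i$ (the values of $f_i$ strictly decrease along it, so no vertex repeats). As $i$ and $k$ lie in different components of $G-j$, the vertex $j$ is a cut vertex separating them, so every $k$--$i$ path of $G$---in particular the spectral path---passes through $j$.

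I expect the only real difficulty to be the bookkeeping in the contradiction argument: one must keep track that the neighbour delivered by Theorem~\ref{core weighted result part 2} (or by the averaging remark) is never the special vertex $i$ and never carries an $f_i$-value smaller than $m$---the former because $V(C)$ is cut off from $i$ by $j$, the latter because $m$ is minimal on $V(C)$ and $f_i(j)\ge m$ in the case at hand---and that the ``$w_V=0$'' conclusion then genuinely propagates across the connected set $C$ to reach $k$. Everything else is a direct appeal to results already established in this subsection.
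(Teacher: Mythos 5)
Your proof of the final assertion is exactly the paper's entire proof: Theorem \ref{spectral paths are well-defined} gives a spectral path from $k$ to $i$, $f_i$ strictly decreases along it, and since $i$ and $k$ lie in different components of $G-j$ every $k$--$i$ path of $G$ (in particular the spectral one) passes through $j$. The paper then gets the inequality $f_i(k)>f_i(j)$ for free from this: $j$ occurs strictly after $k$ on the spectral path and $f_i$ strictly decreases along it. So your separate maximum-principle argument on the component $C$ is not needed; it is, however, a genuinely different route to the first claim, and it is correct whenever every edge of $E$ carries positive weight (you would also want to note that the decreasing neighbour produced by Theorem \ref{core weighted result part 2} can be taken to be positively weighted, which its proof does give).

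In the paper's stated generality, though, edges of $E$ may have weight zero (these are precisely the edges that get deleted in the limiting constructions), and there your component argument has a gap. The averaging remark you invoke is, per its formal version $f_i(u)=d(u)^{-1}\sum_{x} w_E(u,x) f_i(x)$, a statement about positively weighted neighbours only; across a zero-weight edge it says nothing. Consequently the propagation of $S$ through $C$---and even your intermediate claim $\min_{u\in V(C)} f_i(u) > f_i(j)$---can fail: take $V=\{i,j,k,z\}$ with positive-weight edges $ij$, $jk$, $jz$, a zero-weight edge $kz$, and $w_V(z)=0$, $w_V(j)=w_V(k)=1$. Then $C=\{k,z\}$ and the optimizer has $f_i(z)=f_i(j)<f_i(k)$, so $m=f_i(j)$ and you enter the contradiction branch, but no contradiction is available: $z$'s only positively weighted neighbour is $j$, the set $S=\{z\}$ never reaches $k$, and indeed nothing false follows from the true configuration. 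The theorem itself survives, of course, via the spectral-path argument. The fix for your first half is either to run the same argument inside the subgraph of positive-weight edges (the component of $k$ in that subgraph minus $j$ is still connected, avoids $i$, and all its neighbours lie in itself or equal $j$), or simply to deduce $f_i(k)>f_i(j)$ from the second claim, as the paper does.
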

\begin{proof}
By Theorem \ref{spectral paths are well-defined}, there is a spectral path from $k$ to $i$.
By definition, $f_i$ monotonically decreases along this path.
The assumption is equivalent to stating that every path from $k$ to $i$ includes $j$, and therefore $j$ is in the spectral path from $k$ to $i$.
\end{proof}

Let us remark that a spectral path may not be completely deterministic if a vertex had multiple neighbors with the smallest value under $f_i$, but Theorem \ref{tool to construct paths} still holds regardless of how such ties are resolved.
This implies that such ties can not occur in certain circumstances.

\begin{cor}\label{unique spectral path}
Fix three vertices $j,i,k$.
Suppose $G-j$ is not connected, with $i$ and $k$ in different components of $G-j$.
Moreover, assume $jk \in E$.
Under these assumptions, if $G$ and $G-i$ are each positively connected and $w_V(k) > 0$, then $j$ is the unique vertex that satisfies $f_i(j) = \min_{z \in N(k)} f_i(z)$.
\end{cor}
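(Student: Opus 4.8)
The plan is to argue by contradiction. Since $N(k)$ is finite and nonempty and contains $j$ (because $jk\in E$), it suffices to show that no vertex $z\in N(k)\setminus\{j\}$ satisfies $f_i(z)=\mu$, where $\mu:=\min_{v\in N(k)}f_i(v)$; establishing this simultaneously forces $f_i(j)=\mu$ and makes $j$ the \emph{unique} minimizer over $N(k)$. So suppose some such $z$ exists. I would first record the cheap facts: $\mu\le f_i(j)$ since $j\in N(k)$; and, since the hypotheses $G,G-i$ positively connected and $w_V(k)>0$ are exactly those of Theorem~\ref{tool to construct paths} applied to the triple $j,i,k$, that theorem gives $f_i(k)>f_i(j)\ge\mu$ and, together with the remark following it, that every spectral path from $k$ to $i$ — however ties are resolved — passes through $j$. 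One may also note, though it is not needed, that the edge $kz$ survives in $G-j$, so $z$ lies in the component of $k$ in $G-j$ and in particular $z\neq i$.

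Next I would exploit the freedom in breaking ties. Because $f_i(z)=\mu<f_i(k)$, the arc $k\to z$ is a legitimate choice for the outgoing arc of $k$ in a spectral tree $T_i$ of the kind built in the proof of Theorem~\ref{spectral paths are well-defined} (whose arcs strictly decrease $f_i$). By Theorem~\ref{spectral paths are well-defined} the directed path of this $T_i$ starting at $k$ terminates at $i$; write it as $k=p_0,\ z=p_1,\ p_2,\ \ldots,\ p_r=i$, a spectral path from $k$ to $i$ along which $f_i$ is strictly decreasing. By the previous paragraph this path meets $j$, say $p_t=j$. Since $p_0=k\neq j$ and $p_1=z\neq j$ we have $t\ge 2$, so strict monotonicity yields $f_i(j)=f_i(p_t)<f_i(p_1)=f_i(z)=\mu\le f_i(j)$, a contradiction. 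Hence no such $z$ exists, which proves the claim — in fact the slightly stronger statement that $f_i(j)<f_i(v)$ for every $v\in N(k)\setminus\{j\}$.

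The hardest part will be the middle step: one has to be careful that a spectral path out of $k$ is genuinely \emph{allowed} to step first onto $z$, and that Theorems~\ref{spectral paths are well-defined} and~\ref{tool to construct paths} continue to apply to the resulting tie-resolved path. This is precisely the content of the remark immediately following Theorem~\ref{tool to construct paths}, but I would spell it out rather than invoke it implicitly. Everything else — the inequality $\mu\le f_i(j)$, the strict decrease of $f_i$ along the arcs of $T_i$, and the index bound $t\ge 2$ — is immediate.
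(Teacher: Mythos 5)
Your proof is correct and follows essentially the same route as the paper's: assume a second neighbor $z$ of $k$ attains the minimum, use the tie-breaking freedom to route a spectral path from $k$ to $i$ through the edge $kz$, invoke Theorem~\ref{tool to construct paths} (with the remark on ties) to force that path through $j$, and contradict the strict decrease of $f_i$ along spectral paths. Your write-up is a bit more explicit about the index bound and the legitimacy of the tie-resolved first step, but the underlying argument is the same.
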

\begin{proof}
By Theorem \ref{tool to construct paths}, there is a spectral path from $k$ to $i$ and it passes through $j$.
By way of contradiction, assume that there exists a $z \in N(j)$ with $f_i(z) \leq f_i(j)$.
Then there is a spectral path from $k$ to $i$ that uses edge $kz$.
But such a spectral path will eventually pass through $j$, which contradicts that $f_i$ strictly decreases along spectral paths.
\end{proof}

Our presentation in the next subsections of the counterexamples to Question \ref{motivating question} will use the following outline.
We will construct a sequence of graphs, where each member of that sequence reduces to the same fixed graph $G_i$ under the mapping defined by Theorem \ref{reduction theorem}---but with different weights.
In our sequence of graphs, some $\sim_i$-equivalence classes will remain fixed in size, while others will grow asymptotically large.
We will scale $w_V$ and $w_E$ so that the largest weights remain fixed in value, and the smallest weights become infinitesimally small.
We will consider the limit of this sequence; removing edges with weight zero will allow us to apply Theorem \ref{tool to construct paths}.
The penultimate step of the argument is stating some graph in the sequence of graphs will be close enough to the limit of the sequence that it will have the same spectral paths.
Finally, Theorem \ref{spectral paths are well-defined} will imply that a spectral path exists, Theorem \ref{tool to construct paths} will describe what that path looks like, and Theorem \ref{reduction theorem} will provide a ``lift'' of the spectral path in the weighted graph to a (family of) spectral paths in the original unweighted graph.

The rest of this subsection will be spent on a key part of the above argument that still needs to be shown: for the penultimate step, we require that the spectral paths in the graphs in the sequence are somehow related to the spectral paths in the limiting graph.
That is, we need to show that $f_i$ is continuous under perturbations of $w_V$ and $w_E$.
We will do this by recalling that $f_i$ is the eigenvector of some matrix, which are known to be continuous under perturbations of that matrix when the eigenvalue is simple.
This is is a bit of a surprising result in its own right: eigenvectors are known to be discontinuous, and spectral graph theory is filled with examples of eigenvalues with large multiplicity (for example, see \cite{AvDF} and the references therein).

\begin{theorem}\label{uniqueness of solutions}
If $G$ and $G-i$ are each positively connected, then the largest eigenvalue of $L_i^{-1}W_{V,i}$ is simple.  Equivalently (up to reflection from positive to negative values) the function that solves the optimization problem (\ref{f_i as weighted optimization problem}) is unique.
\end{theorem}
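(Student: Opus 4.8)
The plan is to reduce everything to the real symmetric positive semidefinite matrix $L_i'$ introduced just before Lemma \ref{matrix representation}, and then combine its spectral decomposition with the sign-rigidity supplied by Theorem \ref{core weighted result part 1}. Recall that $L_i^{-1}W_{V,i}$ is similar to $L_i'$ (conjugation by $U_i$), so the linear map $x \mapsto U_i^{-1}x$ carries the $\mu$-eigenspace of $L_i'$ bijectively onto the $\mu$-eigenspace of $L_i^{-1}W_{V,i}$, in particular preserving its dimension; here $\mu$ is the largest eigenvalue of both matrices, and $\mu>0$ by the argument already given in the proof of Theorem \ref{core weighted result part 2}. It therefore suffices to prove that the top eigenspace $E$ of $L_i^{-1}W_{V,i}$ is one-dimensional.

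First I would record that every nonzero $v \in E$, rescaled by $1/\|v_*\|_w$ where $v_*$ denotes the lift of $v$ to $V$ with $v_*(i)=0$, is a solution of the optimization problem (\ref{f_i as weighted optimization problem}). Indeed, $W_{V,i}v = \mu L_i v$ gives $v^T W_{V,i} v = \mu\, v^T L_i v > 0$ since $L_i$ is positive definite and $\mu>0$, hence $c(w_V,w_E,v_*) = (v^T L_i v)/(v^T W_{V,i} v) = 1/\mu$, which is the minimum value of $c$ by Lemma \ref{matrix representation}. By Theorem \ref{core weighted result part 1}, together with its sign-reflected form, every solution of (\ref{f_i as weighted optimization problem}) is either strictly positive at all vertices of $V_i$ or strictly negative at all vertices of $V_i$; the identically-zero alternative is impossible because the $\|\cdot\|_w$-norm of a solution equals $1$.

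Next I would argue by contradiction, assuming $\dim E \ge 2$. Choose linearly independent $v_1,v_2 \in E$, and after negating if necessary assume both are strictly positive on $V_i$. Put $\theta = \min_{u \in V_i} v_1(u)/v_2(u) > 0$ and set $v = v_1 - \theta v_2 \in E$. Then $v \ne 0$ (otherwise $v_1 = \theta v_2$, contradicting independence); $v$ is nonnegative on $V_i$ by the choice of $\theta$; $v$ vanishes at the minimizing vertex; and $v$ is strictly positive at any vertex where $v_1(u)/v_2(u)$ strictly exceeds $\theta$, which must exist since otherwise $v_1/v_2$ is constant on $V_i$ and $v_1,v_2$ would again be dependent. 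So the normalization of $v$ is a solution of (\ref{f_i as weighted optimization problem}) that is neither strictly positive nor strictly negative on $V_i$, contradicting the previous paragraph. Hence $\dim E = 1$. The ``equivalently'' clause follows immediately: by Lemma \ref{matrix representation} the solutions of (\ref{f_i as weighted optimization problem}) are precisely the $\|\cdot\|_w$-normalized dominant right-eigenvectors of $L_i^{-1}W_{V,i}$, and these now span a line, so the normalization pins down the solution up to the overall sign.

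I expect no deep obstacle here; the whole argument is a packaging of machinery already in place. The one point needing care is the bookkeeping of the three related objects $L_i'$, $L_i^{-1}W_{V,i}$, and the function $f_i$ as one passes between them, and the place where the argument genuinely ``bites'' is the use of Theorem \ref{core weighted result part 1} to rule out a nonnegative eigenvector with a zero coordinate — equivalently, the elementary observation that a linear subspace of dimension at least two cannot consist solely of vectors that are sign-definite on $V_i$.
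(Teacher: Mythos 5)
Your proposal is correct and follows essentially the same route as the paper: identify solutions of (\ref{f_i as weighted optimization problem}) with dominant eigenvectors via Lemma \ref{matrix representation}, use the similarity to the real symmetric matrix to get a genuine eigenspace of dimension at least two, and then build a linear combination whose sign pattern contradicts Theorem \ref{core weighted result part 1}. The only (immaterial) difference is that you produce a nonnegative eigenvector with a zero coordinate and a positive coordinate, while the paper produces one with both a positive and a negative coordinate; both violate the same sign-rigidity, and your version spells out the normalization detail the paper leaves implicit.
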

\begin{proof}
The equivalence between the two statements is given by Lemma \ref{matrix representation}.
Let $\lambda$ be the largest eigenvalue of $L_i^{-1}W_{V,i}$.

By way of contradiction, assume $\lambda$ has multiplicity at least two.
Recall that $L_i^{-1}W_{V,i}$ is similar to $L_i$, which is real and symmetric, and therefore diagonalizable.
Therefore the $\lambda$-eigenspace of $L_i^{-1}W_{V,i}$ has dimension at least two.
Let $f_{i,1}$ and $f_{i,2}$ be independent $\lambda$-eigenvectors of $L_i^{-1}W_{V,i}$.

Because $\lambda$-eigenvectors is closed under linear combinations, there exists a vector $f_{i,*} = \alpha_1 f_{i,1} + \alpha_2 f_{i,2}$ for $\alpha_1, \alpha_2 \in \mathbb{R}$ that is a $\lambda$-eigenvector of $L_i^{-1}W_{V,i}$ and has a negative and a positive coordinate.
By Lemma \ref{matrix representation}, $f_{i,*}$ is a solution to the optimization problem (\ref{f_i as weighted optimization problem}).
But this contradicts Theorem \ref{core weighted result part 1}.
\end{proof}

\begin{theorem}\label{continuous function}
The function $f_i$ that solves the optimization problem (\ref{f_i as weighted optimization problem}) is continuous under perturbations of $w_V$ and $w_E$ inside the space of weights where $G$ and $G-i$ are positively connected.
\end{theorem}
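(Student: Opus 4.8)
The plan is to reduce the statement to the classical perturbation theory of a simple eigenvalue. By Lemma \ref{matrix representation}, any solution of (\ref{f_i as weighted optimization problem}) is a dominant right-eigenvector of $M := L_i^{-1}W_{V,i}$; by Theorem \ref{uniqueness of solutions} the dominant eigenvalue $\lambda$ of $M$ is simple at every point of the domain under consideration (the open set of weights for which $G$ and $G-i$ are positively connected); and by Theorem \ref{core weighted result part 1} a solution has constant sign on $V_i$, so imposing $\|f_i\|_w=1$ together with the convention that $f_i$ is positive on $V_i$ makes $f_i$ a genuine, single-valued function of $(w_V,w_E)$ on that domain. Since continuity is a local property, it is enough to fix an arbitrary base point $(w_V^0,w_E^0)$ in the domain and prove continuity there.

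It is cleanest to pass to the symmetric picture used to prove Lemma \ref{matrix representation}. On the set where $G$ is positively connected, $L_i$ is positive definite by the proposition preceding Lemma \ref{matrix representation}, so it has a Cholesky factorization $L_i=U_i^TU_i$; the Cholesky factor is a real-analytic function of $L_i$ on the cone of positive definite matrices, so $U_i$ and $U_i^{-1}$ depend continuously on $w_E$, and hence $L_i'=(U_i^{-1})^TW_{V,i}U_i^{-1}$ depends continuously on $(w_V,w_E)$. Now $L_i'$ is symmetric and similar to $M$, so it has the same simple dominant eigenvalue $\lambda$, and if $v$ is a corresponding unit eigenvector then $U_i^{-1}v$ is a dominant eigenvector of $M$, i.e. a scalar multiple of $f_i$. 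So it suffices to show that the dominant unit eigenvector of $L_i'$, with a fixed sign, varies continuously, and then to compose with the continuous maps $v\mapsto U_i^{-1}v$ and $x\mapsto \pm x/\|x\|_w$; the latter is well-defined and continuous near $U_i^{-1}v$ because $\|U_i^{-1}v\|_w^2=v^TL_i'v=\lambda>0$, the positivity of $\lambda$ being exactly what the first paragraph of the proof of Theorem \ref{core weighted result part 2} establishes from positive connectivity.

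For the symmetric eigenpair I would include the short implicit-function-theorem argument. Let $M_0=L_i'(w_V^0,w_E^0)$, let $\lambda_0$ be its (strictly) largest eigenvalue and $v_0$ a unit eigenvector, and define $F(M,\lambda,v)=\bigl(Mv-\lambda v,\ v_0^Tv-1\bigr)$ on $\mathrm{Sym}_n\times\mathbb{R}\times\mathbb{R}^n$. Then $F(M_0,\lambda_0,v_0)=0$, and the derivative of $F$ in $(\lambda,v)$ at this point sends $(\mu,u)$ to $\bigl((M_0-\lambda_0 I)u-\mu v_0,\ v_0^Tu\bigr)$. Since $M_0$ is symmetric, $\operatorname{range}(M_0-\lambda_0 I)=v_0^{\perp}$ and, because $\lambda_0$ is simple, $\ker(M_0-\lambda_0 I)=\operatorname{span}(v_0)$; feeding $(\mu,u)$ into the kernel of this derivative forces first $\mu=0$ and then $u=0$, so the derivative is invertible. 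The implicit function theorem then produces continuous (indeed real-analytic) functions $\lambda(M),v(M)$ near $M_0$ with $F=0$, so $v(M)$ is an eigenvector of $M$ for the eigenvalue $\lambda(M)$; continuity of the roots of the characteristic polynomial keeps $\lambda(M)$ the strict maximum, hence the simple dominant eigenvalue, in a neighborhood, and $v(M)$ is the unique unit dominant eigenvector there with $v_0^Tv(M)>0$. Pulling this local selection back through the compositions of the previous paragraph yields a continuous map agreeing, near $(w_V^0,w_E^0)$, with the function $f_i$ (by uniqueness), which proves continuity at that point and hence everywhere on the domain.

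I expect the main obstacle to be bookkeeping rather than any single hard estimate: $f_i$ is defined by an optimization problem carrying a seminorm constraint (degenerate when $W_{V,i}$ has many zero entries) and a sign ambiguity, $M$ itself is not symmetric, and the naive ``eigenvectors vary continuously'' assertion is false without simplicity—so the real work is to weld together Theorem \ref{uniqueness of solutions}, the Cholesky change of variables, the nonvanishing of $\lambda$ and of $\|U_i^{-1}v\|_w$, and the implicit function theorem into one argument, taking care throughout that all the normalizations involved stay away from zero on the stated domain.
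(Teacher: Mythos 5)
Your proof is correct, and its skeleton is the same as the paper's: write $f_i$ as the (normalized, sign-fixed) dominant eigenvector of $L_i^{-1}W_{V,i}$, observe that this matrix depends continuously on $(w_V,w_E)$, and invoke simplicity of the top eigenvalue (Theorem \ref{uniqueness of solutions}) to get continuity of the eigenvector. Where you differ is in how that last step is handled: the paper simply defines $\tau(w_V,w_E)=L_i^{-1}W_{V,i}$ and a map $\rho$ returning a top eigenvector, and cites as folklore (with external references) that $\rho$ is continuous at matrices whose top eigenvalue is simple. You instead make that step self-contained: you conjugate by the Cholesky factor to the symmetric matrix $L_i'=(U_i^{-1})^TW_{V,i}U_i^{-1}$, prove the local eigenpair continuity by an implicit function theorem argument (the invertibility of the derivative using $\operatorname{range}(M_0-\lambda_0 I)=v_0^\perp$ and simplicity is exactly right), and then track the normalization explicitly, noting $\|U_i^{-1}v\|_w^2=v^TL_i'v=\lambda>0$ so that dividing by $\|\cdot\|_w$ is legitimate. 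This buys rigor the paper leaves implicit --- in particular the paper's $\rho$ is only well-defined up to scale and sign, a point you resolve by fixing the sign/normalization and appealing to uniqueness (and, implicitly, to strict positivity of $f_i$ on $V_i$ at the base point so the sign cannot flip nearby). One cosmetic slip: the implicit function theorem with the constraint $v_0^Tv=1$ does not produce a \emph{unit} eigenvector as you briefly assert, only a nonvanishing continuous eigenvector selection; since you renormalize by $\|\cdot\|_w$ afterwards anyway, this does not affect the argument.
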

\begin{proof}
Assume that the vertex set and edge set of our graph $G$ is fixed, as well as the special vertex $i$.
Let $\mathcal{W}_V$ denote the space of vertex weights and let $\mathcal{W}_E$ denote the space of edge weights such that $G$ and $G-i$ are positively connected.
Let $M_{n-1}(\mathbb{R})$ be the space of $(n-1)\times(n-1)$ real matrices.
We define a function $\tau : \mathcal{W}_V \times \mathcal{W}_E \rightarrow M_{n-1}(\mathbb{R})$ as $\tau(w_V, w_E) = L_i^{-1}W_{V,i}$.

We can think of elements of $\mathcal{W}_V \times \mathcal{W}_E$ as points in $\mathbb{R}^{V + E}$ and an element of $M_{n-1}(\mathbb{R})$ as a point in $\mathbb{R}^{(n-1)^2}$.
This frame of context allows us to apply natural norms to these spaces (such as $\ell_2$), and we claim that $\tau$ is continuous in such a setting.
As $L_i$ is independent of $w_V$, $\tau$ is clearly continuous under perturbation of the $w_V$ variable.
Observe that $W_{V,i}$ is independent of $w_E$ and $L_i$ is continuous under perturbations of $w_E$.
Because matrix inverse is a continuous operation, $\tau$ is continuous under perturbation of the $w_E$ variable.
This proves the claim.

We define a function $\rho : M_{n-1}(\mathbb{R}) \rightarrow \mathbb{R}^{n-1}$ that takes a matrix and returns an (arbitrarily chosen) eigenvector corresponding to the largest eigenvalue $\lambda$.
It is folklore\footnote{See, for example,\\ \url{https://mathoverflow.net/questions/207452/} or\\ \url{https://math.stackexchange.com/questions/1133071/} or\\ \url{https://math.stackexchange.com/questions/807144/} for different approaches to this.} that when $\lambda$ is a simple eigenvalue, $\rho$ is continuous.
By Theorem \ref{uniqueness of solutions}, $\rho$ is continuous.

As constructed, Lemma \ref{matrix representation} implies that $f_i = \rho(\tau(w_V, w_E))$.
The proof concludes by stating that the composition of two continuous functions is continuous.
\end{proof}

\begin{cor}\label{limiting works one step}
Suppose that $G$ and $G-i$ are positively connected.
Also, suppose for fixed vertices $j,i,k$, we have $G-j$ is not connected, with $i$ and $k$ in different components of $G-j$, and $jk \in E$.
There exists an $\epsilon > 0$ such that if $w_E$ and $w_V$ are perturbed by at most $\epsilon$ where the set of neighbors of $k$ remains fixed (we allow the creation/deletion of an edge with weight $\epsilon$ that is not incident with $k$), then the spectral path from $k$ to $i$ will still include $j$.
\end{cor}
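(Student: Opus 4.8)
The plan is to distill the structural hypothesis on $G$ into a single robust numerical inequality about the unperturbed weighted graph and then carry that inequality across the perturbation using the continuity of $f_i$ from Theorem \ref{continuous function}; the permitted creation or deletion of an edge not incident with $k$ will be absorbed by enlarging the underlying graph. First I would apply Corollary \ref{unique spectral path} to the original graph $G$ (its hypotheses---$G$ and $G-i$ positively connected, $jk\in E$, and $w_V(k)>0$---all hold), obtaining that $j$ is the \emph{unique} vertex of $N(k)$ attaining $\min_{z\in N(k)}f_i(z)$. Since $N(k)$ is finite, $\delta:=\min_{z\in N(k)\setminus\{j\}}f_i(z)-f_i(j)>0$ (if $N(k)=\{j\}$ the conclusion is immediate). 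Recalling that the first edge of the spectral path from $k$ to $i$ leaves $k$ toward the neighbor of $k$ minimizing $f_i$---an arc which does lie in $T_i$, since by Theorem \ref{core weighted result part 2} some neighbor of $k$ has strictly smaller $f_i$-value than $k$---it suffices to show that after perturbation $j$ is still the strict minimizer of $f_i$ over $N(k)$.

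Next I would fit the edge creation/deletion into the framework of Theorem \ref{continuous function} by working in the graph $\widehat G$ on vertex set $V$ with edge set $E\cup\{e\}$, where $e$ is the permitted extra edge (not incident with $k$), and treating the base configuration as $w_V$ together with $w_E$ extended by the value $0$ on $e$. Because zero-weight edges do not affect the optimization problem (\ref{f_i as weighted optimization problem}), the function $f_i$ of $\widehat G$ at this base configuration agrees with $f_i$ of $G$, and $N(k)$ is the same in $\widehat G$ as in $G$; moreover switching $e$ on to weight at most $\epsilon$ while also perturbing $w_V$ and $w_E$ by at most $\epsilon$ is merely a perturbation of size $O(\epsilon)$ in the ambient weight space $\mathbb{R}^{V+E(\widehat G)}$.

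Then I would choose $\epsilon$ in two stages: first below the smallest positive edge weight and the smallest positive vertex weight of the base configuration, so that every originally positive weight (in particular $w_V(k)$) remains positive and hence every positive-weight path of $G$, so of $\widehat G\supseteq G$, survives, keeping $\widehat G$ and $\widehat G-i$ positively connected on the whole $\epsilon$-ball---which is exactly the domain on which Theorem \ref{continuous function} applies; and then small enough that Theorem \ref{continuous function} gives $|f_i^{\mathrm{new}}(z)-f_i(z)|<\delta/3$ for each of the finitely many $z\in N(k)$, where $f_i^{\mathrm{new}}$ denotes the perturbed solution. For $z\in N(k)\setminus\{j\}$ this yields $f_i^{\mathrm{new}}(z)>f_i(z)-\delta/3\ge f_i(j)+2\delta/3>f_i^{\mathrm{new}}(j)$, so $j$ is still the unique minimizer of $f_i^{\mathrm{new}}$ over $N(k)$; by Theorem \ref{spectral paths are well-defined} a spectral path from $k$ to $i$ exists in the perturbed graph, and its first step is forced from $k$ to $j$, so $j$ lies on every such spectral path regardless of later tie-breaking.

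The hard part is conceptual rather than computational: a perturbation really can change the underlying graph and, in particular, can make $\widehat G-j$ connected with $i$ and $k$ in one component, so Corollary \ref{unique spectral path} and Theorem \ref{tool to construct paths} cannot simply be reapplied after perturbing. The two devices above are what sidestep this---extracting from the \emph{unperturbed} graph the single quantitative fact ``$f_i(j)<f_i(z)$ for all $z\in N(k)\setminus\{j\}$'', which involves only finitely many fixed coordinates and is therefore stable under small perturbations, and pre-embedding in $\widehat G$ so that toggling $e$ is an ordinary weight perturbation. The only remaining care is in confirming, via the two-stage choice of $\epsilon$, that the whole neighborhood stays in the region where $\widehat G$ and $\widehat G-i$ are positively connected, which is where Theorem \ref{continuous function} is valid.
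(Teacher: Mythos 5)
Your proposal is correct and is essentially the paper's argument: the paper proves this corollary in one line by combining the continuity of $f_i$ (Theorem \ref{continuous function}) with the uniqueness of the minimizing neighbor $j$ of $k$ (Corollary \ref{unique spectral path}), which is exactly the quantitative gap-plus-continuity argument you spell out. Your additional care (embedding the toggled edge as a zero-weight edge of an enlarged graph, keeping positive connectivity on the whole $\epsilon$-ball, and the implicitly assumed $w_V(k)>0$) just makes explicit details the paper leaves to the reader.
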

\begin{proof}
This follows from the continuity from Theorem \ref{continuous function} and the uniqueness of the spectral path from Corollary \ref{unique spectral path}.
\end{proof}

By construction of spectral paths, if $j$ is on the spectral path $P$ from $k$ to $i$, then the spectral path from $j$ to $i$ is a sub-path of $P$.
We can thus iteratively apply Corollary \ref{limiting works one step} to achieve the next statement.

\begin{cor}\label{limiting works}
Suppose that $G$ and $G-i$ are positively connected and that there exists a unique path $P$ from vertex $k$ to $i$.
There exists an $\epsilon > 0$ such that if $w_E$ and $w_V$ are perturbed by at most $\epsilon$ where the set of neighbors of vertices in $P$ remains fixed (we allow the creation of an edge with weight $\epsilon$ that may create a new path from $k$ to $i$), then the spectral path from $k$ to $i$ will still be $P$.
\end{cor}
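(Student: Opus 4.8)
The plan is to induct on the length of the path $P$, using Corollary \ref{limiting works one step} as the base mechanism for each edge. Let $P = k_0 k_1 \cdots k_m$ with $k_0 = k$ and $k_m = i$. The uniqueness of $P$ as a path from $k$ to $i$ means that $G - k_\ell$ is disconnected for every internal vertex $k_\ell$ (with $1 \le \ell \le m-1$), separating $k_0,\ldots,k_{\ell-1}$ from $k_{\ell+1},\ldots,k_m$; in particular, for each $\ell \in \{0,1,\ldots,m-1\}$ we have that $G - k_{\ell+1}$ separates $k_\ell$ from $i$, and $k_\ell k_{\ell+1} \in E$. First I would observe that after any perturbation respecting the hypothesis (neighborhoods of vertices of $P$ unchanged, except possibly a new weight-$\epsilon$ edge not touching $P$), the vertex $k_{\ell+1}$ still separates $k_\ell$ from $i$ in the perturbed graph: the new edge does not create a $k_\ell$-to-$i$ path avoiding $k_{\ell+1}$ because it is not incident with any vertex of $P$, hence cannot reconnect the two sides of the cut across $k_{\ell+1}$. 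This means the structural hypotheses of Corollary \ref{limiting works one step} persist for every $\ell$ throughout the whole $\epsilon$-ball.

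Next I would apply Corollary \ref{limiting works one step} once for each index $\ell \in \{0,1,\ldots,m-1\}$, obtaining a value $\epsilon_\ell > 0$ such that any perturbation of size at most $\epsilon_\ell$ (with $N(k_\ell)$ fixed) forces the spectral path from $k_\ell$ to $i$ to pass through $k_{\ell+1}$. Set $\epsilon = \min_\ell \epsilon_\ell$. Now fix a perturbation of size at most $\epsilon$ satisfying the hypothesis. I would then chain the conclusions: the spectral path from $k_0$ to $i$ passes through $k_1$; by the remark preceding the statement, the spectral path from $k_1$ to $i$ is a sub-path of it; that path passes through $k_2$; and so on. Since $f_i$ strictly decreases along a spectral path (Theorem \ref{spectral paths are well-defined} guarantees the path exists because $w_V(k) > 0$ can be preserved for small $\epsilon$, and the construction is monotone), no vertex repeats, so iterating yields that the spectral path from $k_0$ to $i$ contains $k_0, k_1, \ldots, k_m$ in order. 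Because $P$ is the unique path from $k$ to $i$ and a spectral path is in particular a path, the spectral path from $k$ to $i$ must be exactly $P$.

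The main obstacle I anticipate is the bookkeeping around the perturbed neighborhoods in the inductive step: Corollary \ref{limiting works one step} allows one new weight-$\epsilon$ edge not incident with the single vertex $k_\ell$ under consideration, whereas here we must simultaneously forbid new edges incident with \emph{any} vertex of $P$. This is the \emph{stronger} restriction, so each application of Corollary \ref{limiting works one step} is valid — but I would want to state carefully that a single global perturbation satisfying the Corollary \ref{limiting works} hypothesis also satisfies, for each fixed $\ell$, the (weaker) hypothesis of Corollary \ref{limiting works one step}. A secondary point requiring care is that Corollary \ref{limiting works one step} as stated concludes only that $j = k_{\ell+1}$ lies on the spectral path from $k_\ell$ to $i$, not that it is the unique minimizing neighbor; but combined with the strict monotonicity of $f_i$ along spectral paths and the uniqueness of $P$, there is no room for the path to deviate, so this causes no real difficulty. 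I would also remark that preserving positive connectivity of $G$ and $G - i$ under the perturbation is automatic for small $\epsilon$ since positive connectivity is an open condition (only finitely many edges, each weight varying continuously), matching the framework already set up in Theorem \ref{continuous function}.
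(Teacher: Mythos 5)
Your overall strategy---apply Corollary \ref{limiting works one step} once per edge of $P$, take $\epsilon$ to be the minimum of the resulting $\epsilon_\ell$, and chain the conclusions using the remark that the spectral path from an intermediate vertex of a spectral path is a sub-path---is the same as the paper's (very terse) argument. However, two of your supporting claims are wrong as stated. First, the assertion that $k_{\ell+1}$ still separates $k_\ell$ from $i$ after the perturbation is false: a new weight-$\epsilon$ edge joining two vertices outside $P$ can reconnect the two sides of the cut via edges incident to $P$ that were already present (take $P = k\,a\,i$ with extra vertices $b,c$, existing edges $kb$ and $ci$, and new edge $bc$); indeed the corollary's own statement allows the new edge to ``create a new path from $k$ to $i$.'' Fortunately this claim is not needed: the hypotheses of Corollary \ref{limiting works one step} concern only the unperturbed graph, so each application is legitimate without any persistence of the cut.

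Second---and this is the genuine gap---your closing step, ``because $P$ is the unique path from $k$ to $i$ and a spectral path is a path, the spectral path must be exactly $P$,'' invokes uniqueness of $P$ in the \emph{perturbed} graph, which is exactly what may fail once the new edge is added. Chaining the ``includes $k_{\ell+1}$'' conclusions only shows the spectral path visits $k_0,\ldots,k_m$ in order; it does not by itself rule out a detour between $k_\ell$ and $k_{\ell+1}$ that uses the new edge. The fix is already in the paper's toolkit: by Corollary \ref{unique spectral path}, in the unperturbed graph $k_{\ell+1}$ is the \emph{unique} minimizer of $f_i$ over $N(k_\ell)$, and since the neighbor set of $k_\ell$ is unchanged, Theorem \ref{continuous function} yields an $\epsilon_\ell$ below which $k_{\ell+1}$ remains the strict minimizer; hence the spectral path's step out of $k_\ell$ is the edge $k_\ell k_{\ell+1}$ itself. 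With that strengthening at every vertex of $P$, the iteration produces $P$ exactly rather than merely a path containing the vertices of $P$.
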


%%%%%%%%%%%%%%%%%%%%%%%%%%%%%%%%%%%%%%%%%%%%%%%%%%%%%%%%%%%%%%%%%%%%%%%%%%%%%%%%%%%%%%%%%%%%%%%%%%%%%%%%%%%%%%%%%%%%%%%%%%%%%%%
%%%%%%%%%%%%%%%%%%%%%%%%%%% CONSTRUCTIONS %%%%%%%%%%%%%%%%%%%%%%%%%%%%%%%%%%%%%%%%%%%%%%%%%%%%%%%%%%%%%%%%%%%%%%%%%%%%%%%%%%%%%
%%%%%%%%%%%%%%%%%%%%%%%%%%%%%%%%%%%%%%%%%%%%%%%%%%%%%%%%%%%%%%%%%%%%%%%%%%%%%%%%%%%%%%%%%%%%%%%%%%%%%%%%%%%%%%%%%%%%%%%%%%%%%%%
\subsection{Weighted Cycle}\label{const 1 subsec}

We construct an unweighted graph family $G_{\ell,k}$, which is indexed by positive integers $\ell,k$.
For fixed $\ell,k$, we construct $G_{\ell,k}$ as follows.
Start by creating disjoint paths $x_{0,i}x_{1,i}x_{2,i}\ldots x_{\ell,i}$ for each $1 \leq i \leq k$.
The vertices $x_{0,1},x_{0,2},\ldots ,x_{0,k}$ are identified to a single vertex $u$, and the vertices $x_{\ell,1},x_{\ell,2},\ldots ,x_{\ell,k}$ are identified to a single vertex $v$.
Finally, we add edge $uv$.
Figure \ref{construction 1 image} illustrates the outcome of this construction when $\ell = 5$ and $k=2$.
Observe that $G_{\ell,k}$ is a planar graph for all $\ell$ and $k$.
Let us consider the spectral paths for special vertex $u$.

\begin{figure}[htbp]
\begin{center}
\includegraphics[width=2in]{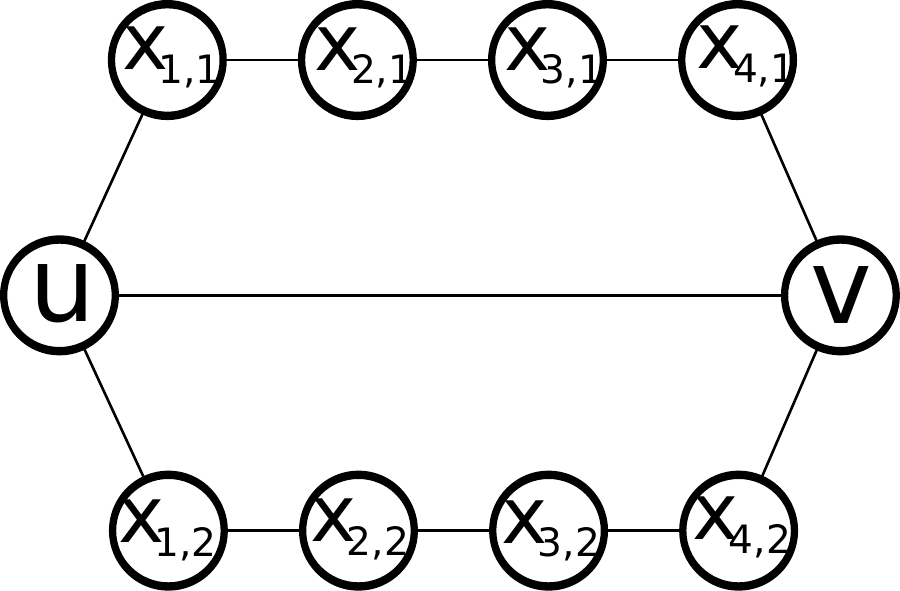}
\caption{This is graph $G_{5,2}$.}\label{construction 1 image}
\end{center}
\end{figure}

\begin{prop}\label{describing equivalence classes is weighted cycle}
Consider $G_{\ell,k}$ for $k \geq 2$.
The equivalence classes of $\sim_u$ are $\{x_{1,i}\}_i, \{x_{2,i}\}_i, \ldots, \{x_{\ell-1,i}\}_i,\{v\}$. 
\end{prop}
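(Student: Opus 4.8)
The plan is to determine the subgroup of automorphisms of $G_{\ell,k}$ fixing $u$ by a degree count together with a ``path-tracing'' argument, and then read off its orbits.

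First I would record the combinatorial data. The vertex set is $V=\{u,v\}\cup\{x_{j,i}:1\le j\le \ell-1,\ 1\le i\le k\}$, with $\deg(u)=\deg(v)=k+1$ (one neighbor $x_{1,i}$, resp.\ $x_{\ell-1,i}$, from each of the $k$ paths, plus the edge $uv$) while $\deg(x_{j,i})=2$ for every $1\le j\le \ell-1$. Since $k\ge 2$ we have $k+1\ge 3\neq 2$, so $u$ and $v$ are precisely the vertices whose degree is not $2$; as automorphisms preserve degree, any automorphism $\pi$ with $\pi(u)=u$ also satisfies $\pi(v)=v$. This is the only place the hypothesis $k\ge 2$ enters, and it already shows that $v$ is a $\sim_u$-class by itself and that $v$ is $\sim_u$-equivalent to no $x_{j,i}$.

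Next I would exhibit enough automorphisms to show each set $\{x_{j,i}:1\le i\le k\}$ lies in a single $\sim_u$-class. For a permutation $\sigma$ of $\{1,\dots,k\}$ define $\pi_\sigma$ by $\pi_\sigma(u)=u$, $\pi_\sigma(v)=v$, and $\pi_\sigma(x_{j,i})=x_{j,\sigma(i)}$. Checking the four kinds of edges ($ux_{1,i}$, $x_{j,i}x_{j+1,i}$, $x_{\ell-1,i}v$, $uv$) shows $\pi_\sigma$ is an automorphism fixing $u$; taking $\sigma$ to be a transposition gives $x_{j,i}\sim_u x_{j,i'}$ for all $i,i'$ and all $1\le j\le\ell-1$. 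To finish, I would show no further collapsing occurs. Let $\pi$ be an automorphism with $\pi(u)=u$; by the first step $\pi(v)=v$ as well. Among the neighbors of $u$, namely $v$ and $x_{1,1},\dots,x_{1,k}$, only the $x_{1,i}$ have degree $2$, so $\pi$ permutes $\{x_{1,i}\}_i$, say $\pi(x_{1,i})=x_{1,\sigma(i)}$. Now induct on $j$: assuming $\pi(x_{j,i})=x_{j,\sigma(i)}$ and $\pi(x_{j-1,i})=x_{j-1,\sigma(i)}$ (reading $x_{0,i}=u$), the vertex $x_{j+1,i}$ is the unique neighbor of $x_{j,i}$ other than $x_{j-1,i}$, so $\pi(x_{j+1,i})$ must be the unique neighbor of $x_{j,\sigma(i)}$ other than $x_{j-1,\sigma(i)}$, which is $x_{j+1,\sigma(i)}$ (and for $j+1=\ell$ this reads $v$, consistent with $\pi(v)=v$). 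Hence every automorphism fixing $u$ preserves the index $j$ of each internal vertex, so the orbit of $x_{j,i}$ is exactly $\{x_{j,i'}:1\le i'\le k\}$; together with the singleton $\{v\}$ this is the asserted list of classes.

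The degree bookkeeping and the edge-by-edge check that each $\pi_\sigma$ is an automorphism are routine; the one place deserving a little care is the induction in the last step, specifically handling the two ends of a path (the neighbor $u$ of $x_{1,i}$ and the neighbor $v$ of $x_{\ell-1,i}$) so that the ``unique continuation'' is valid along the whole path. (The construction makes sense as a simple graph only for $\ell\ge 2$, which I would tacitly assume; when $\ell=2$ there are no intermediate induction steps and the base case suffices.)
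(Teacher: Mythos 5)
Your proposal is correct, and its skeleton matches the paper's: you exhibit the index-permuting automorphisms $\pi_\sigma$ to show each level set $\{x_{j,i}\}_i$ lies in one class, and you use degree preservation (valid precisely because $k\ge 2$ makes $u,v$ the only vertices of degree $\neq 2$) to force $\pi(v)=v$, so $\{v\}$ is its own class. The one place you diverge is the final step showing no two levels merge: the paper argues via ``landmarks,'' using that automorphisms preserve distances, so the pair $\bigl(d(u,x_{j,i}),\,d(v,x_{j,i})\bigr)$ already separates $j\neq j'$ once $u$ and $v$ are fixed; you instead run a unique-continuation induction along the degree-$2$ paths to pin down the entire action of any automorphism fixing $u$, namely $\pi(x_{j,i})=x_{j,\sigma(i)}$ for a single permutation $\sigma$ of the path indices. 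Your route is slightly longer but yields strictly more information (it identifies the stabilizer of $u$ as the symmetric group on the $k$ paths), whereas the paper's distance argument is shorter and is the template it reuses, in ``longer, more tedious'' form, for the analogous proposition about $H_{\ell,k,t}$; both are complete proofs of the stated claim.
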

\begin{proof}
The map $\pi$ that fixes $u$ and $v$ and $\pi(x_{j,i}) = \pi(x_{j,i+1})$ is a graph automorphism.
Therefore $x_{j,i} \sim_u x_{j',i'}$ if $j = j'$.
This proves that each of the desired sets is a subset of an equivalence class; the remainder of the proof will be to show that no other equivalence relations exist.
To do so, we repurpose the the idea of ``landmarks'' in a graph.
We make use of two facts about a graph automorphism $\pi$: that $d(a) = d(\pi(a))$ and $d(a,b) = d(\pi(a),\pi(b))$ for any pair of vertices $a,b$.
In particular, if $\pi(b) = b$, then $d(a,b) = d(\pi(a),b)$.
By the first fact, if $\pi$ fixes $u$, then it also fixes $v$.
We finish the proof with the second fact by observing that if $j \neq j'$ then $d(u,x_{j,i}) \neq d(u,x_{j',i'})$ or $d(v,x_{j,i}) \neq d(v,x_{j',i'})$.
\end{proof}

Let us constructed the smaller weighted graph $(G_{\ell,k})_u$ as in Theorem \ref{reduction theorem}.
Let $V_* = \{v_*, u_*, x_{1,*}, x_{2,*}, \ldots, x_{\ell-1,*}\}$ and define $\phi$ by $\phi(v) = v_*$, $\phi(u) = u_*$, and $\phi(x_{j,i}) = x_{j,*}$.
There are edges $u_*v_*$, $u_*x_{1,*}$, $x_{\ell-1,*}v_*$, and $x_{j,*}x_{j+1,*}$ for $1 \leq j \leq \ell-2$.
The vertex weights are $w_V(x_{j,*}) = k$, $w_V(v_*) = 1$.
All edges have weight $k$ except $iv_*$, which has weight $1$.

\begin{theorem}\label{spectral paths for weighted cycle}
Fix $\ell$.  
For any $i$ and $k$ sufficiently large, the spectral path from $x_{\ell-1,i}$ to $u$ has length $\ell-1$, while the distance between those vertices is $2$.
\end{theorem}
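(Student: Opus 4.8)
The plan is to run the reduction-and-limit template described at the end of Section~\ref{weights subsec}. By Proposition~\ref{describing equivalence classes is weighted cycle} and the description preceding the theorem, the reduced graph $(G_{\ell,k})_u$ is a cycle on the $\ell+1$ vertices $u_*,x_{1,*},\dots,x_{\ell-1,*},v_*$ in which $w_E(u_*v_*)=1$ and every other edge has weight $k$, while $w_V(x_{j,*})=k$ and $w_V(v_*)=1$ (the weight of the special vertex $u_*$ is irrelevant to (\ref{f_i as weighted optimization problem})). First I would record that spectral paths are invariant under rescaling $w_E$ and $w_V$ by a common positive constant, and in fact under rescaling either one alone, since this merely multiplies the functional $c(w_V,w_E,\cdot)$ of (\ref{defining c}) by a constant and hence preserves its minimizers. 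So we may replace $(G_{\ell,k})_u$ by the cycle $C_k$ having $w_E(u_*v_*)=1/k$, all other edge weights $1$, $w_V(x_{j,*})=1$, and $w_V(v_*)=1/k$.

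\textbf{The limit graph.} Next I would let $k\to\infty$. As the weight of $u_*v_*$ tends to $0$, the limiting object $G_\infty$ is the path $u_*-x_{1,*}-\cdots-x_{\ell-1,*}-v_*$ with all edge weights $1$, $w_V(x_{j,*})=1$, and $w_V(v_*)=0$. Both $G_\infty$ and $G_\infty-u_*$ are positively connected and $w_V(x_{\ell-1,*})=1>0$, so Theorem~\ref{spectral paths are well-defined} gives a spectral path from $x_{\ell-1,*}$ to $u_*$ in $G_\infty$. For each $m$ with $1\le m\le \ell-2$, deleting $x_{m,*}$ disconnects $G_\infty$ with $u_*$ and $x_{\ell-1,*}$ in different components, so Theorem~\ref{tool to construct paths} (with the roles $j=x_{m,*}$, $i=u_*$, $k=x_{\ell-1,*}$) forces this spectral path to pass through $x_{m,*}$. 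Hence it contains all of $u_*,x_{1,*},\dots,x_{\ell-1,*}$, and the only path in $G_\infty$ from $x_{\ell-1,*}$ to $u_*$ with that property is $P:=x_{\ell-1,*}x_{\ell-2,*}\cdots x_{1,*}u_*$, of length $\ell-1$.

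\textbf{Transfer and lift.} By Corollary~\ref{limiting works}, applied with $G_\infty$ as base graph and $P$ the unique $x_{\ell-1,*}$--$u_*$ path, there is $\epsilon>0$ such that every perturbation of $(w_V,w_E)$ of size at most $\epsilon$ of the permitted form---changing neighbourhoods of vertices of $P$ only through the addition of a single small-weight edge that may create a new $x_{\ell-1,*}$--$u_*$ path---keeps the spectral path equal to $P$. For $k>1/\epsilon$ the graph $C_k$ is precisely such a perturbation of $G_\infty$ (we raise $w_E(u_*v_*)$ from $0$ to $1/k$ and $w_V(v_*)$ from $0$ to $1/k$), so the spectral path from $x_{\ell-1,*}$ to $u_*$ in $(G_{\ell,k})_u$ is $P$. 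Finally, since every $\sim_u$-class of $G_{\ell,k}$ is edgeless, Theorem~\ref{reduction theorem} lifts this: $f_u(x_{j,i})=\widehat f_u(x_{j,*})$, $f_u(v)=\widehat f_u(v_*)$, and $f_u(u)=0$ for every branch $i$. Because $\widehat f_u$ strictly decreases along $P$ by construction, $\widehat f_u(x_{1,*})<\cdots<\widehat f_u(x_{\ell-1,*})$; and by the continuity of Theorem~\ref{continuous function}, for $k$ large $\widehat f_u(v_*)$ is close to $\widehat f_u(x_{\ell-1,*})$ (it equals it in $G_\infty$, where $v_*$ is a pendant vertex of weight $0$) and in particular exceeds $\widehat f_u(x_{\ell-2,*})$. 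Together with $0=f_u(u)<f_u(x_{2,i})$, tracing $f_u$-minimising neighbours from $x_{\ell-1,i}$ yields the spectral path $x_{\ell-1,i}x_{\ell-2,i}\cdots x_{1,i}u$, of length $\ell-1$. Since $x_{\ell-1,i}v,\ vu\in E$, the distance from $x_{\ell-1,i}$ to $u$ is at most $2$, and it equals $2$ provided $\ell\ge3$ (so $x_{\ell-1,i}$ and $u$ are non-adjacent).

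\textbf{Main obstacle.} The delicate step is the transfer: making sure that the limiting behaviour on $G_\infty$---where $u_*v_*$ has effectively been deleted---genuinely governs the spectral path in $(G_{\ell,k})_u$ for all large $k$, where $u_*v_*$ is present with small weight. This is exactly what the continuity apparatus (Theorems~\ref{uniqueness of solutions} and~\ref{continuous function}, Corollaries~\ref{limiting works one step} and~\ref{limiting works}) was built for, so the real work is the bookkeeping: checking that re-introducing $u_*v_*$---an edge incident with $u_*\in P$ but not with any vertex at which Corollary~\ref{limiting works one step} must be invoked along the iteration---falls within the permitted perturbations, and that $G_\infty$ is positively connected both as a whole and after deleting $u_*$, so that Theorems~\ref{spectral paths are well-defined} and~\ref{tool to construct paths} apply. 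A minor but real point is that the conclusion requires $\ell\ge3$ for the distance to be $2$ rather than $1$.
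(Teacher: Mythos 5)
Your proposal is correct and takes essentially the same approach as the paper: reduce by symmetry to the weighted cycle, rescale and let $k \to \infty$ to obtain the path with $v_*$ given weight zero, get existence and uniqueness of the spectral path there via Theorem \ref{spectral paths are well-defined} (the limit being a tree), then transfer back for large $k$ with Corollary \ref{limiting works} and lift with Theorem \ref{reduction theorem}. The extra details you supply (invoking Theorem \ref{tool to construct paths} to pin down the path, and the continuity check that $\widehat f_u(v_*)$ exceeds $\widehat f_u(x_{\ell-2,*})$ at the branch vertex, plus the $\ell \ge 3$ caveat) are harmless elaborations of steps the paper leaves implicit.
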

\begin{proof}
We consider the weighted graph $(G_{\ell,k})_u$ where each edge weight and vertex weight is multiplied by $k^{-1}$.
The function optimizing (\ref{f_i as weighted optimization problem}) does not change if we scale the weights of the vertices or edges by a uniform scalar.
Let $G_{(\ell)}$ be the resulting weighted graph when $k \rightarrow \infty$ and all edges with zero weight are deleted.
Specifically, $G_{(\ell)}$ is a path with endpoints $u_*$ and $v_*$ where every edge has weight $1$.
Each vertex has weight $1$ except $v_*$, which has weight $0$.
By Theorem \ref{spectral paths are well-defined} there is a spectral path from $x_{\ell-1,*}$ to $u_*$ in $G_{(\ell)}$.
There is only one path from $x_{\ell-1,*}$ to $u_*$, and it has length $\ell-1$.
By Corollary \ref{limiting works}, for sufficiently large $k$, this is also the spectral path in $(G_{\ell,k})_u$, which can be lifted to the spectral path in $G_{\ell,k}$ that starts at $x_{\ell-1,i}$.
\end{proof}

%%%%%%%%%%%%%%%%%%%%%%%%%%%%%%%%%%%%%%%%%%%%%%%%%%%%%%%%%%%%%%%%%%%%%%%%%%%%%%%%%%%%%%%%%%%%%%%%%%%%%%%%%%%%%%%%%%%%%%%%%%%%%%%
%%%%%%%%%%%%%%%%%%%%%%%%%%% CONSTRUCTIONS %%%%%%%%%%%%%%%%%%%%%%%%%%%%%%%%%%%%%%%%%%%%%%%%%%%%%%%%%%%%%%%%%%%%%%%%%%%%%%%%%%%%%
%%%%%%%%%%%%%%%%%%%%%%%%%%%%%%%%%%%%%%%%%%%%%%%%%%%%%%%%%%%%%%%%%%%%%%%%%%%%%%%%%%%%%%%%%%%%%%%%%%%%%%%%%%%%%%%%%%%%%%%%%%%%%%%
\subsection{Double Broom With Extra Structure}\label{const 2 subsec}

We construct an unweighted graph family $H_{\ell,k,t}$, which is indexed by positive integers $\ell,k>2$ and positive real $t$.
The graph $H_{\ell,k,t}$ contains a subgraph $B_{\ell,k}$ that will be used later, and so we describe it separately.
We start constructing $B_{\ell,k}$ with $2k$ disjoint paths: for each $1 \leq i \leq k$ create path $x_{0,i}x_{1,i}x_{2,i}\ldots x_{\ell+1,i}$ and path $y_{0,i}y_{1,i}y_{2,i}y_{3,i}y_{4,i}y_{5,i}$.
The vertices $x_{0,1},y_{0,1},x_{0,2},y_{0,2},\ldots ,x_{0,k},y_{0,k}$ are identified to a single vertex $u$, and the vertices $x_{\ell+1,1},y_{5,1},x_{\ell+1,2},y_{5,2},\ldots ,x_{\ell+1,k},y_{5,k}$ are identified to a single vertex $v$.
The vertices $y_{2,1},y_{2,2},\ldots,y_{2,k}$ are identified into a single vertex $y_{2,*}$, and the vertices $y_{3,1},y_{3,2},\ldots,y_{3,k}$ are identified into a single vertex $y_{3,*}$ (implicitly, the $k$ edges $y_{2,i}y_{3,i}$ become a single edge $y_{2,*}y_{3,*}$).
We illustrate $B_{6,3}$ in Figure \ref{construction 2 image}.
In this construction, we call vertices $u$ and $v$ the ``connectors'' of the subgraph.

\begin{figure}[htbp]
\begin{center}
\includegraphics[width=3in]{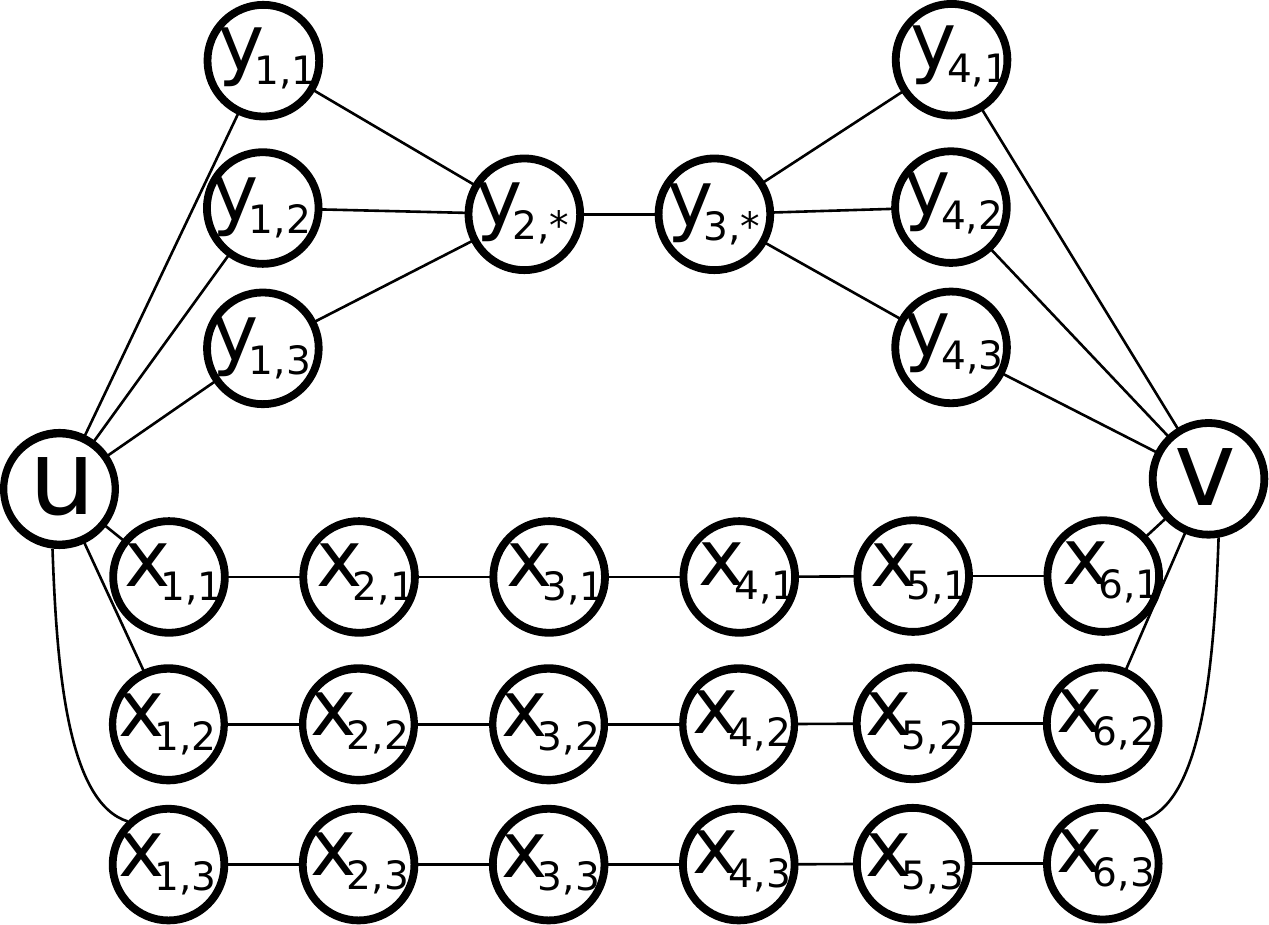}
\caption{This is graph $B_{6,3}$.}\label{construction 2 image}
\end{center}
\end{figure}

Let $T = \lfloor tk \rfloor$.
We construct $H_{\ell,k,t}$ from $B_{\ell,k}$ by adding $T$ pendant vertices to each of $u$ and $v$.
We illustrate $H_{6,3,5/3}$ in Figure \ref{construction 2 image 2}.
Let $u_1, \ldots, u_{T}$ be the pendant vertices adjacent to $u$, and let $v_1, \ldots, v_T$ be the pendant vertices adjacent to $v$.
For any $i,j$, the path from $u_i$ to $v_j$ is at most $7$.
We will show that for fixed $\ell$, the symmetric spectral path between $u_i$ and $v_j$ is $\ell+3$ when $k$ is sufficiently large.
If we chose a vertex pair at random from $H_{\ell,k,t}$, the probability that we would choose $\{u_i,v_j\}$ for some $i,j$ is $\frac{2T^2}{(2T + 4 + k(\ell + 2))^2}$, which converges to $\frac{1}{2}$ as $t$ grows (here, we fix both $\ell$ and $k$).

\begin{figure}[htbp]
\begin{center}
\includegraphics[width=3.5in]{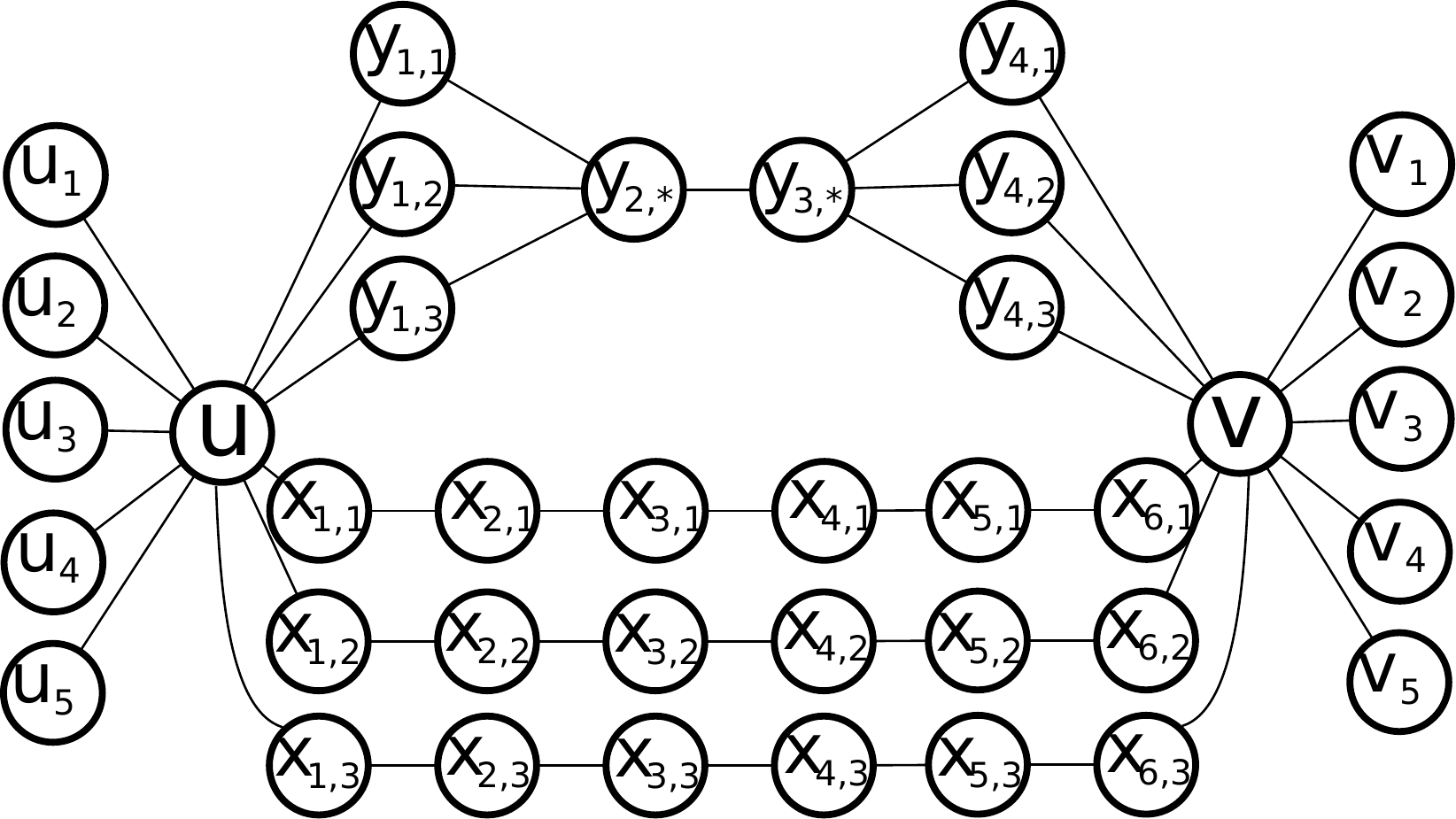}
\caption{This is graph $H_{6,3,\frac{5}{3}}$.}\label{construction 2 image 2}
\end{center}
\end{figure}

In the following, we consider special vertex $u_1$ and study the spectral path from $v_1$ to $u_1$.
Observe that by symmetry, this will describe the spectral path from $u_i$ to $v_j$ and the spectral path from $v_j$ to $u_i$ for all $i,j$.
Because every path that ends at $u_1$ has $u$ as the penultimate step, it suffices to establish the spectral path from $v_1$ to $u$.
Hence, we will deal with special vertex $u$ instead of $u_1$.
We omit the proof to the following statement, as it follows---in a longer, more tedious way---the same argument as for Proposition \ref{describing equivalence classes is weighted cycle}.

\begin{prop}
Consider $H_{\ell,k,t}$ for $k > 2$ and $\ell > 6$.
The equivalence classes of $\sim_{u}$ are $\{u_i\}_{i}$, $\{u\}$, $\{x_{1,i}\}_i, \ldots, \{x_{\ell,i}\}_i$, $\{y_{1,i}\}_i$, $\{y_{2,*}\}$, $\{y_{3,*}\}$, $\{y_{4,i}\}_i$, $\{v\}$, $\{v_i\}_i$.
\end{prop}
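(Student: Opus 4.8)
The plan is to follow the template of Proposition~\ref{describing equivalence classes is weighted cycle}. Since (as noted in Section~\ref{weights subsec}) $\sim_u$ is an equivalence relation, it suffices to prove two containments: that each of the listed sets lies inside a single $\sim_u$-class, and that no $\sim_u$-class is strictly larger than one of the listed sets. The first is handled by exhibiting automorphisms; the second by identifying enough ``landmark'' vertices that every automorphism fixing $u$ must also fix, and then checking that the remaining structure is rigid.

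For the first containment, I would write down, for each pair $1\le i\ne i'\le k$, the transposition of branch $i$ with branch $i'$ \emph{within a single family}: the map sending $x_{m,i}\leftrightarrow x_{m,i'}$ for all $1\le m\le\ell$ and fixing everything else, and likewise the maps swapping $y_{1,i}\leftrightarrow y_{1,i'}$, or $y_{4,i}\leftrightarrow y_{4,i'}$, or $u_i\leftrightarrow u_{i'}$, or $v_i\leftrightarrow v_{i'}$. Each such map is immediately seen to preserve adjacency (the only edges it moves are the two copies of the relevant branch edges, e.g.\ $uy_{1,\cdot}$ and $y_{1,\cdot}y_{2,*}$) and to fix $u$. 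Since transpositions generate all permutations of the branch index, this shows $\{x_{m,i}\}_i$ (each $m$), $\{y_{1,i}\}_i$, $\{y_{4,i}\}_i$, $\{u_i\}_i$, and $\{v_i\}_i$ each lie in one $\sim_u$-class; the sets $\{u\},\{v\},\{y_{2,*}\},\{y_{3,*}\}$ are singletons and need no argument.

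For the converse, let $\pi$ be any automorphism of $H_{\ell,k,t}$ with $\pi(u)=u$. The crucial step is to show $\pi$ fixes the landmark set $\{u,v,y_{2,*},y_{3,*}\}$ pointwise: the vertices $u,v$ are the unique vertices of degree $2k+T$ and $y_{2,*},y_{3,*}$ are the unique vertices of degree $k+1$, so (using $k>2$ to separate these from the remaining degrees, which are at most $2$) $\pi$ must fix $v$ and permute $\{y_{2,*},y_{3,*}\}$, and since $\pi$ preserves distance to $u$ and $d(u,y_{2,*})=2\ne 3=d(u,y_{3,*})$ it fixes $y_{2,*}$ and $y_{3,*}$ as well. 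Having fixed these four landmarks, $\pi$ restricts to an automorphism of $H_{\ell,k,t}$ minus the landmark set, which is a disjoint union of $k$ paths on $\ell$ vertices (the interiors $x_{1,i}\dots x_{\ell,i}$) together with $4k$ isolated vertices ($y_{1,i}$, $y_{4,i}$, $u_i$, $v_i$). The restriction must also respect adjacency to the four fixed landmarks, and these adjacency patterns are all distinct: a vertex adjacent to both $u$ and $y_{2,*}$ (the $y_{1,i}$) can only map to another such vertex, similarly for the $y_{4,i}$ (adjacent to $y_{3,*}$ and $v$), the $u_i$ (degree $1$, adjacent to $u$), and the $v_i$; and on the long branches $\pi$ permutes the $k$ paths, with the endpoint $x_{1,i}$ distinguished as the one adjacent to $u$ (and $u\ne v$), so each $x_{m,i}$ maps to some $x_{m,i'}$ with position $m$ preserved. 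Hence no two vertices drawn from different listed sets are $\sim_u$-related, which is the remaining containment.

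The step I expect to be the main obstacle is not conceptual but organizational: one must carefully enumerate the degree sequence to be certain the landmark set is genuinely pinned down, and then run through each vertex type verifying that its landmark-adjacency pattern (or, for the long branches, its distance to $u$ along a rigid path) rules out confusion with every other type. This is precisely the ``longer, more tedious'' bookkeeping the statement alludes to, and it is also where the numerical hypotheses $k>2$ and $\ell>6$ are consumed, keeping the degree and distance invariants non-degenerate.
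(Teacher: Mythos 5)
Your proof is correct and is essentially the argument the paper intends: the paper omits the proof, stating it follows the same ``landmark'' argument as Proposition~\ref{describing equivalence classes is weighted cycle}, and your proposal carries out exactly that scheme (branch-transposition automorphisms for one containment; degree and distance-to-$u$ invariants pinning $u,v,y_{2,*},y_{3,*}$, then rigidity of the remaining branches for the other). The only cosmetic remark is that your argument scarcely uses $\ell>6$ beyond nondegeneracy, which is fine since the hypothesis is only needed to be sufficient.
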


Let us constructed the smaller weighted graph $(H_{\ell,k,t})_{u}$ as in Theorem \ref{reduction theorem}.
Let 
$$V_* = \{v_*, v', u_*, u',  x_{1}', \ldots, x_{\ell}',  y_{1}', \ldots, y_{4}'\}.$$
We define $\phi$ and $w_V$ by 
\begin{itemize}
	\item $\phi(v) = v_*$, $\phi(u) = u_*$, $\phi(y_{2,*}) = y_2'$, $\phi(y_{3,*}) = y_3'$, each with weight $1$;
	\item $\phi(v_i) = v'$ with weight $T$;
	\item $\phi(u_i) = u'$ with weight $T$;
	\item $\phi(x_{j,i}) = x_{j}'$, each with weight $k$; and
	\item $\phi(y_{j,i}) = y_{j}'$ for $j \in \{1,4\}$, each with weight $k$.
\end{itemize}
All edges have weight $k$ except for $w_E(y_2',y_3') = 1$ and $w_E(v_*,v') = w_E(u_*, u') = T$.

\begin{theorem}\label{spectral paths for double broom}
Fix $\ell>6$ and $t'$, and consider the graph $H_{\ell,k,t}$.
For $k$ sufficiently large, the spectral path from $v_1$ to $u_1$ has length $\ell+3$, while the distance between those vertices is $7$.
\end{theorem}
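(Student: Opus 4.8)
The plan is to reuse the template from the proof of Theorem~\ref{spectral paths for weighted cycle}, with one extra step to handle a degeneracy absent there. As explained in the paragraph preceding the statement, the spectral path from $v_1$ to $u_1$ is the spectral path from $v_1$ to $u$ followed by the edge $uu_1$, so it suffices to work with special vertex $u$ and locate the spectral path from $v_1$ to $u$. First I would pass to the reduced weighted graph $(H_{\ell,k,t})_u$ and multiply every vertex weight and edge weight by $k^{-1}$, which does not change a solution of (\ref{f_i as weighted optimization problem}). Letting $k\to\infty$ and deleting the only edge whose scaled weight tends to $0$, namely $y_2'y_3'$, produces a weighted graph $G_\infty$: the path $u_*x_1'\cdots x_\ell'v_*$ with pendants $u'$ at $u_*$ and $v'$ at $v_*$, together with pendant paths $u_*y_1'y_2'$ and $v_*y_4'y_3'$; here $u_*,v_*,y_2',y_3'$ have weight $0$, the vertices $u'$ and $v'$ have weight $t$, and $x_1',\dots,x_\ell',y_1',y_4'$ have weight $1$.

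The feature that does not occur in the cycle construction is that $G_\infty-u_*$ is disconnected: its components are $\{u'\}$, the pendant path $\{y_1',y_2'\}$, and the component $C$ containing $v'$, namely $C=\{x_1',\dots,x_\ell',v_*,v',y_4',y_3'\}$. Write $G'=G_\infty[\{u_*\}\cup C]$; it is a tree for which both $G'$ and $G'-u_*$ are positively connected. The matrix $L_{u_*}^{-1}W_{V,u_*}$ for $G_\infty$ is block diagonal along the three components: the blocks on $\{u'\}$ and on $\{y_1',y_2'\}$ contribute the eigenvalue $1$, while the block on $C$ is exactly the corresponding matrix for $G'$, whose dominant eigenvalue $\lambda_0$ is simple by Theorem~\ref{uniqueness of solutions} and satisfies $\lambda_0>1$ (test the Rayleigh quotient in (\ref{f_i as weighted optimization problem}) with $g(v')=1$, $g(v_*)=\epsilon$, and $g\equiv 0$ elsewhere, whose value is $(1-\epsilon)^2+2\epsilon^2/t<1$ for small $\epsilon>0$). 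Hence the dominant eigenvalue of $L_{u_*}^{-1}W_{V,u_*}$ at $G_\infty$ is $\lambda_0$, it is simple, and its eigenvector vanishes on $u',y_1',y_2'$ and restricts on $C$ to a scalar multiple of $f^{G'}_{u_*}$. Since $G_\infty$ is positively connected, so that $L_{u_*}(G_\infty)$ is invertible, and since the dominant eigenvector depends continuously on the matrix wherever that eigenvalue is simple, the argument of Theorem~\ref{continuous function} still yields continuity of $f_{u_*}$ (as a function of the weights) at $G_\infty$, even though $G_\infty-u_*$ is disconnected.

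Next I would determine the spectral path from $v'$ to $u_*$ in $G'$, where the hypotheses are clean. The vertices $v_*,x_\ell',x_{\ell-1}',\dots,x_1'$ are, in this order, cut vertices of $G'$ each separating $v'$ from $u_*$, so Theorem~\ref{tool to construct paths} (using $w_V(v')=t>0$) forces the spectral path from $v'$ to $u_*$ to pass through all of them; as $G'$ is a tree this forces it to be the unique $v'$--$u_*$ path $v'v_*x_\ell'x_{\ell-1}'\cdots x_1'u_*$, of length $\ell+2$. The same theorem applied with $k=y_4'$, $j=x_\ell'$ gives $f^{G'}_{u_*}(y_4')>f^{G'}_{u_*}(x_\ell')$, and likewise $f^{G'}_{u_*}(v')>f^{G'}_{u_*}(x_\ell')$ and $f^{G'}_{u_*}(v')>f^{G'}_{u_*}(v_*)$, so at $v_*$ the descent strictly prefers the long $x$-route to the short $y$-branch and to the pendant $v'$; combined with the strict monotonicity of $f^{G'}_{u_*}$ along the $x$-path (again from Theorem~\ref{tool to construct paths}), every inequality along the path is strict. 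Every vertex of this path other than $u_*$ has the same neighborhood in $(H_{\ell,k,t})_u$ as in $G'$, so by the continuity established above, for all $k$ sufficiently large the same strict inequalities hold in the rescaled $(H_{\ell,k,t})_u$ and the spectral path from $v'$ to $u_*$ there is again $v'v_*x_\ell'\cdots x_1'u_*$, of length $\ell+2$.

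Finally, Theorem~\ref{reduction theorem} lifts this to a spectral path from $v_1$ to $u$ in $H_{\ell,k,t}$ of the form $v_1\,v\,x_{\ell,i}\,x_{\ell-1,i}\cdots x_{1,i}\,u$, of length $\ell+2$, which (by the remark preceding the statement) together with the edge $uu_1$ is the spectral path from $v_1$ to $u_1$, of length $\ell+3$. For the distance, $u_1$ and $v_1$ are leaves, so every $v_1$--$u_1$ path has the form $v_1\,v\,\cdots\,u\,u_1$ and length $2+d(u,v)$, and $d(u,v)=5$ (attained by $u\,y_{1,i}\,y_{2,*}\,y_{3,*}\,y_{4,i}\,v$), because any other route runs along an $x$-path and has length $\ell+1>7$; thus the distance equals $7$. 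I expect the main obstacle to be the second paragraph---carrying the limiting and continuity argument through the point $G_\infty$ even though $G_\infty-u_*$ is disconnected---which hinges on the small but essential estimate $\lambda_0>1$, ensuring that the relevant eigenvalue, and hence the eigenvector, stays simple and continuous in the limit.
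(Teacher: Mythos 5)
Your proposal is correct, and its overall route is the same as the paper's: pass to the reduced weighted graph $(H_{\ell,k,t})_u$, rescale by $k^{-1}$, take the limit $k\to\infty$ and delete the vanishing edge $y_2'y_3'$, identify the forced path $v'v_*x_\ell'\cdots x_1'u_*$ of length $\ell+2$ in the limit tree via Theorem~\ref{tool to construct paths}, transfer back to large $k$ by continuity, and lift via Theorem~\ref{reduction theorem}. The genuine difference is your second paragraph: the paper simply cites Theorem~\ref{spectral paths are well-defined} and Corollary~\ref{limiting works}, whose hypotheses require $G-i$ to be positively connected, even though here that hypothesis fails --- the class $u'$ of pendants is attached only to the special vertex $u_*$, so $(H_{\ell,k,t})_u - u_*$ is disconnected for every $k$, and in the limit the components $\{u'\}$ and $\{y_1',y_2'\}$ split off as well. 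Your block-diagonal decomposition of $L_{u_*}^{-1}W_{V,u_*}$, together with the Rayleigh-quotient estimate showing the dominant eigenvalue $\lambda_0$ of the block on the component of $v'$ exceeds the eigenvalue $1$ contributed by the other blocks, restores simplicity of the top eigenvalue and hence the continuity and uniqueness statements the argument needs; restricting Theorem~\ref{tool to construct paths} to the subtree $G'$ where the connectivity hypotheses do hold finishes the job. So your proof buys a rigorous patch of a step the paper's three-sentence proof glosses over, at the cost of the extra eigenvalue-separation computation; the distance computation ($d(u,v)=5$ via the $y$-route, hence $d(u_1,v_1)=7$) matches the paper's discussion preceding the theorem.
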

\begin{proof}
Recall that the spectral path from $v_1$ to $u_1$ is the spectral path from $v_1$ to $u$ with the edge $uu_1$ appended.
Thus, it suffices to prove that the spectral path from $v_1$ to $u$ has length $\ell_2$.

We consider the weighted graph $(H_{\ell,k,t})_{u}$ where each edge weight and vertex weight is multiplied by $k^{-1}$.
Let $H_{(\ell,t)}$ be the resulting weighted graph when $k \rightarrow \infty$ and the edge $y_2'y_3'$ (the unique edge with weight zero) is deleted.
Observe that $w_V(v') = t > 0$, so by Theorem \ref{spectral paths are well-defined} there exists a spectral path from $v'$ to $u_*$ in $H_{(\ell,t)}$.
The resulting graph $H_{(\ell,t)}$ is a tree, so the spectral path from $v'$ to $u_*$ must be the unique path from $v'$ to $u_*$, which is $v'v_*x_\ell'x_{\ell-1}'\cdots x_1'u_*$ and has length $\ell+2$.
By Corollary \ref{limiting works}, for sufficiently large $k$, this is also the spectral path in $(H_{\ell,k,t})_{u}$, which can be lifted to the spectral path in $H_{\ell,k,t}$ that begins at $v_1$.
\end{proof}

%%%%%%%%%%%%%%%%%%%%%%%%%%%%%%%%%%%%%%%%%%%%%%%%%%%%%%%%%%%%%%%%%%%%%%%%%%%%%%%%%%%%%%%%%%%%%%%%%%%%%%%%%%%%%%%%%%%%%%%%%%%%%%%
%%%%%%%%%%%%%%%%%%%%%%%%%%% CONSTRUCTIONS %%%%%%%%%%%%%%%%%%%%%%%%%%%%%%%%%%%%%%%%%%%%%%%%%%%%%%%%%%%%%%%%%%%%%%%%%%%%%%%%%%%%%
%%%%%%%%%%%%%%%%%%%%%%%%%%%%%%%%%%%%%%%%%%%%%%%%%%%%%%%%%%%%%%%%%%%%%%%%%%%%%%%%%%%%%%%%%%%%%%%%%%%%%%%%%%%%%%%%%%%%%%%%%%%%%%%
\subsection{A Path for a Block Structure}\label{const 3 subsec}

We construct an unweighted graph family $J_{\ell,k,d}$, which is indexed by even positive integer $\ell > 5$ and positive integers $k>2,d>\ell$.
For fixed $\ell,k,d$ we construct $J_{\ell,k,d}$ by taking a copy of $B_{\ell,k}$ with connectors $u_i,v_i$ for each $1 \leq i \leq d$.
We will name copy $i$ of $B_{\ell,k}$ as $B_i$.
We then identify $v_i$ with $u_{i+1}$ for each $1 \leq i < d$.
We then add $k$ pendant vertices $x_1, \ldots, x_k$ attached at $u_1$ and $k$ more pendant vertices $y_1, \ldots, y_k$ attached at $v_d$.

For $d > \ell > 6$, the diameter of $J_{\ell,k,d}$ is $5d + \ell - 4$, which is achieved by taking the distance from the copy of $x_{\ell/2-2,1}$ in $B_1$ to the copy of $x_{\ell/2+3,1}$ in $B_d$.
Using arguments similar to those in Section \ref{const 2 subsec}, it is clear that the length of the symmetric spectral path from $x_1$ to $y_1$ is $(\ell + 1) d + 2$.
Thus, spectral paths can have length that is an unbounded multiple of the diameter of the overall space.

%%%%%%%%%%%%%%%%%%%%%%%%%%%%%%%%%%%%%%%%%%%%%%%%%%%%%%%%%%%%%%%%%%%%%%%%%%%%%%%%%%%%%%%%%%%%%%%%%%%%%%%%%%%%%%%%%%%%%%%%%%%%%%%
%%%%%%%%%%%%%%%%%%%%%%%%%%% POSITIVE RESULTS %%%%%%%%%%%%%%%%%%%%%%%%%%%%%%%%%%%%%%%%%%%%%%%%%%%%%%%%%%%%%%%%%%%%%%%%%%%%%%%%%%
%%%%%%%%%%%%%%%%%%%%%%%%%%%%%%%%%%%%%%%%%%%%%%%%%%%%%%%%%%%%%%%%%%%%%%%%%%%%%%%%%%%%%%%%%%%%%%%%%%%%%%%%%%%%%%%%%%%%%%%%%%%%%%%
\section{Conditions for Bounded Length Spectral Paths}\label{pos result sec}

In Section \ref{spread subsec} we will prove Theorem \ref{positive result}, which is our positive result.
In Section \ref{Ricci curve sec} we present two questions whose answer could lead to a positive answer to Question \ref{motivating question}.

%%%%%%%%%%%%%%%%%%%%
\subsection{Spread}\label{spread subsec}
For simplicity, we will assume the special vertex $i$ is fixed during this section.

We consider an optimization problem that can informally be interpreted as replacing an $\ell_2$ norm in (\ref{original laplace as optimization}) with an $\ell_\infty$ norm:
\begin{equation}\label{spread optimization}
 s_* := \min_{\|g\|_2 = 1, 0 = \sum_i g(i)} \max_{ij \in E} |g(i) - g(j)| . 
\end{equation}
The \emph{spread} of a graph is $s_*^{-1}$.
We consider a related optimization problem; it is derived from (\ref{spread optimization}) using  the same transformation that turned (\ref{original laplace as optimization}) into (\ref{f_i as optimization problem}):
\begin{equation}\label{modified spread eqn}
 \min_{\|g\|_2 = 1, 0 = g(i)} \max_{ij \in E} |g(i) - g(j)| . 
\end{equation}
Let $\tilde{f}$ be a function that solves optimization problem (\ref{modified spread eqn}), and let $s = \max_{ij \in E} |\tilde{f}(i) - \tilde{f}(j)|$.
By replacing $\tilde{f}$ with $-\tilde{f}$ when necessary, we assume that there exists a $j$ with $\tilde{f}(j) > 0$.

\begin{prop}\label{main tool for spread}
For each $j \neq i$ we have $\tilde{f}(j) > 0$.
Moreover, there exists a $j' \in N(j)$ such that $\tilde{f}(j') = \tilde{f}(j) - s$.
\end{prop}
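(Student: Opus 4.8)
The plan is to recast (\ref{modified spread eqn}) as a norm maximization over a polytope and then produce explicit competitors. Let $s$ denote the optimal value of (\ref{modified spread eqn}) and set
\[ K := \{\, g : V \to \mathbb{R} : g(i) = 0,\ |g(a)-g(b)| \le s \ \text{for all}\ ab \in E \,\}. \]
First I would check that $g$ solves (\ref{modified spread eqn}) if and only if $g \in K$ and $\|g\|_2 = 1$, and that $\max_{g \in K}\|g\|_2 = 1$: indeed, if some $h \in K$ had $\|h\|_2 > 1$ then $h/\|h\|_2$ would be admissible for (\ref{modified spread eqn}) with objective $s/\|h\|_2 < s$. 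In particular $s > 0$ since $\tilde f$ is nonconstant, and because $\|\cdot\|_2$ is maximized over $K$ exactly where the strictly convex function $\|\cdot\|_2^2$ is, every solution of (\ref{modified spread eqn})---in particular $\tilde f$---is an extreme point of $K$. All further contradictions are obtained by exhibiting a $g \in K$ with $\|g\|_2 > 1$.

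For positivity I would argue in three steps. (i) $\tilde f$ does not vanish off $i$: the function $|\tilde f|$ also lies in $K$ with norm $1$, hence is a solution, and is nonnegative; if it vanished at some vertex $v \ne i$ then every neighbour of $v$ would have $|\tilde f|$-value in $[0,s]$, so raising $|\tilde f|$ by a small $\epsilon > 0$ on its entire zero set would keep it in $K$ while strictly increasing the norm, a contradiction. (ii) Being an extreme point of the polytope $K$, $\tilde f$ is pinned down by its tight edge constraints together with $g(i)=0$; the usual argument---an unpinned vertex, or a connected component of the tight-edge subgraph not containing $i$, could be translated slightly in either direction without leaving $K$---shows the tight edges connect every vertex to $i$. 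Hence $\tilde f$ changes by exactly $\pm s$ along each edge of a spanning tree of tight edges, so $\tilde f(v)/s \in \mathbb{Z}$ for all $v$; with (i) this gives $|\tilde f(v)| \ge s$ for every $v \ne i$. (iii) If $\tilde f$ also took a negative value, then, together with the nonempty set $\{\tilde f > 0\}$ guaranteed by the sign normalization in the statement, the sets $\{\tilde f > 0\}$ and $\{\tilde f < 0\}$ would partition $V \setminus \{i\}$ (using (i)), and since $G-i$ is connected they would be joined by an edge $ab$; but then $|\tilde f(a)-\tilde f(b)| = \tilde f(a) + |\tilde f(b)| \ge 2s > s$ by (ii), contradicting $\tilde f \in K$. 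Therefore $\tilde f > 0$ on $V \setminus \{i\}$.

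For the ``moreover'' clause, fix $j \ne i$. Since $\tilde f \in K$, every neighbour $j'$ of $j$ satisfies $\tilde f(j') \ge \tilde f(j) - s$, so if none of them achieved equality we would have $\tilde f(j') > \tilde f(j) - s$ for all $j' \in N(j)$---in particular $\tilde f(j) < s$, as $i \in N(j)$ would otherwise yield equality. Then increasing $\tilde f$ by a small $\epsilon > 0$ at $j$ alone keeps every edge difference $\le s$, while $\|\cdot\|_2^2$ increases by $2\epsilon\tilde f(j)+\epsilon^2 > 0$ because $\tilde f(j) > 0$; this contradicts maximality, so the required $j'$ exists.

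The main obstacle is the positivity statement, and in particular the reliance on the extreme-point structure in steps (ii)--(iii). In Steinerberger's $\ell_2$ setting one can reflect to $|\tilde f|$ and argue that the objective strictly drops on some edge, but a maximum need not drop, so that reduction is unavailable; moreover a naive ``reflect the negative part'' competitor can fail to increase the norm precisely when $\tilde f$ has values of absolute value exceeding $s$. Knowing that solutions are vertices of $K$ with values in $s\mathbb{Z}$ is exactly what forces $|\tilde f(a)| \ge s$ in step (iii); the remaining steps are routine Lipschitz estimates on edge differences.
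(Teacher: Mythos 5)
Your proof is correct, but it takes a genuinely different route from the paper's. The paper defines the quotient $c'(g) = \max_{jk\in E}|g(j)-g(k)|\,/\,\|g\|_2$ and asserts that both claims follow by ``the same argument'' as Theorems \ref{core weighted result part 1} and \ref{core weighted result part 2}, i.e.\ by perturbing this Rayleigh-type quotient and by mimicking the eigenvector identity. You instead recast every solution as an $\ell_2$-norm maximizer over the polytope $K=\{g: g(i)=0,\ |g(a)-g(b)|\le s \text{ for all } ab\in E\}$, note that maximizers of the strictly convex $\|\cdot\|_2^2$ over a convex set are extreme points, and use the tight-edge/translation argument to conclude $\tilde f(v)\in s\mathbb{Z}$, after which positivity and the exact drop of $s$ follow from explicit competitors (raising the zero set; raising a single vertex). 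This buys you something the paper's sketch glosses over: in the mixed-sign case the $\ell_2$ argument does not transfer verbatim (replacing $f$ by $|f|$ need not strictly decrease a maximum, which you correctly flag and repair via the bound $|\tilde f(v)|\ge s$), and there is no eigenvector equation available for the min--max problem, so the ``moreover'' clause needs exactly the kind of single-vertex perturbation you supply; as a bonus, the integrality $\tilde f(v)\in s\mathbb{Z}$ essentially anticipates Theorem \ref{positive result}. (An alternative lighter patch, closer in spirit to the paper: apply your single-vertex perturbation to $|\tilde f|$, which is a nonnegative solution and hence strictly positive off $i$, to get $|\tilde f|\ge s$ on $V_i$ without any extreme-point machinery.) Two small corrections: when you raise $|\tilde f|$ on ``its entire zero set'' you must exclude $i$, since $g(i)=0$ is a constraint of $K$ (edges from $i$ to raised vertices then have stretch $\epsilon\le s$, so the fix is immediate); and your step (iii) uses that $G-i$ is connected---this is indeed the paper's standing assumption, and the statement fails without it, but you should say so explicitly.
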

\begin{proof}
For a nonzero function $g$ with $g(i) = 0$, we define function
$$ c'(g) = \frac{\max_{ij \in E} |g(i) - g(j)|}{\|g\|_2}. $$
The function $\tilde{f}$ can be taken as $g'/\|g\|_2$ for a function $g'$ that minimizes $c'$.
The proof to the first statement of the proposition then follows the same argument as Theorems \ref{core weighted result part 1}.
The proof to the second statement then follows the same argument as Theorem \ref{core weighted result part 2}.
\end{proof}

We construct the tree $T_i^{(c)}$ by replacing $f_i$ with $\tilde{f}$ in the definition of $T_i$ in the Introduction.
By Proposition \ref{main tool for spread}, $T_i^{(c)}$ is well-defined.
We can define a \emph{spread-path from $j$ to $i$} as the path from $j$ to $i$ in $T_i^{(c)}$.

\begin{theorem}\label{positive result}
A spread-path from $j$ to $i$ is a shortest path from $j$ to $i$.
\end{theorem}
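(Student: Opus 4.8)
The plan is to show that $\tilde f$ decreases by exactly $s$ along each step of a spread-path, so that a spread-path of length $r$ from $j$ to $i$ forces $\tilde f(j) = rs$; combined with a lower bound $\tilde f(j) \le d(j,i)\cdot s$ coming from the Lipschitz-type constraint in (\ref{modified spread eqn}), this will pin down $r \le d(j,i)$, and since a spread-path is a genuine path its length is also $\ge d(j,i)$. So the first step is the easy direction: by Proposition \ref{main tool for spread}, each vertex $u \ne i$ on the spread-path has a successor $u'$ with $\tilde f(u') = \tilde f(u) - s$, and the path terminates at $i$ where $\tilde f(i) = 0$; telescoping along the spread-path from $j$ gives $\tilde f(j) = (\text{length of spread-path from } j \text{ to } i)\cdot s$.

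Second, I would establish the upper bound $\tilde f(j) \le d(j,i)\cdot s$ for every vertex $j$. This is where the definition of $s$ does the work: since $s = \max_{uv\in E}|\tilde f(u)-\tilde f(v)|$, the function $\tilde f$ is $s$-Lipschitz with respect to the graph metric, i.e. $|\tilde f(a) - \tilde f(b)| \le s\cdot d(a,b)$ for all $a,b$, proved by summing the per-edge bound along any shortest $a$--$b$ path. Applying this with $b = i$ and using $\tilde f(i)=0$ gives $\tilde f(j) \le s\cdot d(j,i)$ (note $\tilde f(j)>0$ by Proposition \ref{main tool for spread}, so the absolute value is harmless). If $s = 0$ the graph would be edgeless after removing $i$ or $\tilde f$ constant, a degenerate case handled separately or excluded by connectivity; assuming $s>0$ we may divide.

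Third, combine: writing $r$ for the length of the spread-path from $j$ to $i$, steps one and two give $rs = \tilde f(j) \le s\cdot d(j,i)$, hence $r \le d(j,i)$. On the other hand the spread-path is an honest walk in $G$ from $j$ to $i$ (in fact a path in the tree $T_i^{(c)}$), so its length is at least the distance, $r \ge d(j,i)$. Therefore $r = d(j,i)$ and the spread-path is a shortest path.

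\textbf{Main obstacle.} The only delicate point is making the Lipschitz bound airtight at the endpoint and handling the degenerate case $s=0$; the telescoping identity and the "walk is at least as long as the distance" observation are routine. One should also confirm that Proposition \ref{main tool for spread} genuinely yields a successor with $\tilde f$-value decreased by \emph{exactly} $s$ (not merely at least something), which it does by its statement, so that the telescoping is an equality rather than an inequality --- this exactness is what forces the spread-path to be shortest rather than merely short.
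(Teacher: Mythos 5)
Your proposal is correct and follows essentially the same route as the paper: the paper's claim $L \equiv s$ (each spread-path step decreases $\tilde f$ by exactly $s$, using Proposition \ref{main tool for spread} for the lower bound and the definition of $s$ for the upper bound) is your telescoping identity, and the paper's bound that any path from $j$ to $i$ has length at least $s^{-1}\tilde f(j)$ is your $s$-Lipschitz estimate. The only cosmetic difference is bookkeeping: the paper compares both quantities to $s^{-1}\tilde f(j)$ directly, while you compare $r$ to $d(j,i)$; and as you note, connectivity rules out $s=0$.
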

\begin{proof}
By construction, $\tilde{f}$ monotonically decreases along any spread-path to $i$.
We will show that it decreases at a linear rate.
Let $L:V_i \rightarrow \mathbb{R}$ be a function defined by $L(j') = \tilde{f}(j') - \min_{w \in N(j')} \tilde{f}(w)$.
That is, $L$ represents how much $\tilde{f}$ decreases across the next edge of the spread-path to $i$.

We claim that $L \equiv s$.
Let $j'$ be an arbitrary vertex other than the special vertex $i$.
By Proposition \ref{main tool for spread}, $L(j') \geq s$.
By the definition of $s$, we have $L(j') \leq s$.
Thus, the claim is proved.

The above claim implies that for each $j$ we have that $s^{-1}\tilde{f}(j)$ is an integer and equals the length of the spread-path from $j$ to $i$.
The theorem follows from proving that every path from $j$ to $i$ has length at least $s^{-1}\tilde{f}(j)$
Consider a path $j = x_0x_1\cdots x_\ell=i$.
By definition of $L$, we have for $i < \ell$ that $\tilde{f}(x_{i+1}) \geq \tilde{f}(x_i) - L(x_i) = \tilde{f}(x_i) - s$.
Therefore $\ell \geq s^{-1}(\tilde{f}(x_0) - \tilde{f}(x_{\ell})) = s^{-1}(\tilde{f}(j)-0)$.
\end{proof}

%%%%%%%%%%%%%%%%%%%%
\subsection{Graphs with bounded curvature}\label{Ricci curve sec}
The central claim to the proof of Theorem \ref{positive result} is that $L$ is a constant function.
This is stronger than necessary; a positive result towards Question \ref{motivating question} would only require a uniform lower and upper bound on $L$.
We are not the first to be interested in a uniform upper bound: Chung and Yau \cite{CY2017} stated an interest in bounds on ``stretches $|f(x) - f(y)|$ of any given edge'' $xy$ for ``combinatorial eigenfunction $f$.''

To avoid duplication of notation, we will use $f_D$ to denote our ``combinatorial eigenfunction.''
In the contexts of Chung and Yau's studies, $f_D$ is typically the Fiedler right-eigenvector of the \emph{random-walk Laplacian}, which is $D^{-1}L$, although they also consider other right-eigenvectors of that matrix.
Equivalently, $f_D$ is the vector that solves the optimization problem $\min_{x \neq 0}\frac{x^T L x}{x^T D x}$.
Steinerberger (\cite{S}, Section 2.5) does consider the random-walk Laplacian, and explains the focus on the Laplacian to be due to empirical performance.
Let $\lambda$ be the eigenvalue associated to $f_D$.
Let $L_D(x) = f_D(x) - \min_{y \in N(x)} f_D(x)$ when $f_D(x) \geq 0$ and $L_D(x) = -f_D(x) + \min_{y \in N(x)} f_D(x)$ otherwise; so that $L_D$ is the analogue of $L$ for the combinatorial eigenfunction.

Let $\Delta = \max_j d(j)$ denote the maximum degree.
Let $\alpha = \max_j |f_D(j)|$.
We assume that by normalization we have $\|f_D\|_2 = 1$, which implies that $\alpha \in [0,1]$.

Chung and Yau \cite{CY2017} established an upper bound on $L_D$ when the graph satisfies graph curvature conditions. 
There are many different variations of graph curvature; we survey eight variations of positive curvature on Page 545 of \cite{Y_Pos} and five variations of negative curvature are discussed at various places in \cite{Y_Neg}; neither survey includes either of the two curvature variations in \cite{CY2017}.
The central definition of curvature in \cite{CY2017} is simply called ``curvature;'' let $\kappa$ denote the curvature of $G$ when it is well-defined.
A connection between $\kappa$ and other forms of curvature can be found in Section 3.3 of \cite{CY2017}.
Chung and Yau \cite{CY2017} show that $L_D \leq  \alpha \sqrt{(8\Delta +4 \kappa) \lambda}$.
This is a generalization of an earlier result \cite{CY1994} by the same authors for a class of graphs that satisfy $\kappa = 0$, which includes Cayley graphs of abelian groups.

On the other hand, a direct application of the definition of an eigenvector of $D^{-1}L$ gives a (non-uniform) lower bound $L_D(x) \geq \lambda f_D(x)$.

Motivated by finding a second positive answer to Question \ref{motivating question}, we pose the following two questions.
Can the work of Chung and Yau be adapted to the presence of a special vertex?
Is there a uniform lower bound on $L_D$ that incorporates the diameter of the overall space (possibly under an assumption of curvature)?
Perhaps a weaker result would be a uniform lower bound that incorporates $n$?

%%%%%%%%%%%%%%%%%%%%%%%%%%%%%%%%%%%%%%%%%%%%%%%%%%%%%%%%%%%%%%%%%%%%%%%%%%%%%%%%%%%%%%%%%%%%%%%%%%%%%%%%%%%%%%%%%%%%%%%%%%%%%%%
%%%%%%%%%%%%%%%%%%%% BIBLIOGRAPHY %%%%%%%%%%%%%%%%%%%%%%%%%%%%%%%%%%%%%%%%%%%%%%%%%%%%%%%%%%%%%%%%%%%%%%%%%%%%%%%%%%%%%%%%%%%%%
%%%%%%%%%%%%%%%%%%%%%%%%%%%%%%%%%%%%%%%%%%%%%%%%%%%%%%%%%%%%%%%%%%%%%%%%%%%%%%%%%%%%%%%%%%%%%%%%%%%%%%%%%%%%%%%%%%%%%%%%%%%%%%%

\end{document}